\newcommand{\E}{\mathbb{E}}
\newcommand{\A}{\mathcal{A}}
\newcommand{\B}{\mathcal{B}}
\newcommand{\e}{\operatorname{e}}
\newcommand{\N}{\ensuremath{\mathds{N}}}    
\newcommand{\R}{\ensuremath{\mathds{R}}}     
\newcommand{\Z}{\ensuremath{\mathds{Z}}}    
\renewcommand{\P}{\ensuremath{\mathds{P}}} 
\newcommand{\Q}{\ensuremath{\mathds{Q}}} 
\newcommand{\sP}{\scaleto{\P}{4pt}}
\DeclareMathOperator{\Var}{Var}
\newcommand{\comp}{\mathrm{c}}
\DeclareMathOperator{\dd}{\textup{d}\!}
\newcommand{\1}{\mathds{1}}
\newcommand{\KL}{\mathrm{KL}}
\DeclareMathOperator*{\argmax}{arg\,max}
\DeclareMathOperator*{\argmin}{arg\,min}
\let\epsilon=\varepsilon
\let\geq=\geqslant
\let\leq=\leqslant
\newcommand{\smallDG}{\scaleto{D,G}{5pt}}
\newtheoremstyle{mythmstyle}
  {15pt}
  {15pt}
  { }
  { }
  {\bfseries\scshape}
  {.}
  { }
  { }
\theoremstyle{mythmstyle}
\newtheorem{thm}{Theorem}[section]
\newtheorem{lem}{Lemma}[section]
\newtheorem{prop}{Proposition}[section]
\newtheorem{coro}{Corollary}[section]
\newtheorem{defn}{Definition}[section]
\newtheorem{assumption}{Assumption}
\renewenvironment{proof}[1][\bfseries\proofname]{\par
  \pushQED{\qed}%
  \normalfont \topsep6\p@\@plus6\p@\relax
  \trivlist
  \itemindent\z@ 
  \item[\hskip\labelsep
        \scshape
    #1\@addpunct{.}]\ignorespaces
}{%
  \popQED\endtrivlist\@endpefalse
}
\definecolor{monvert}{rgb}{0.0,.5,0.0}
\definecolor{britishracinggreen}{rgb}{0.0, 0.26, 0.15}
\definecolor{monbleu}{rgb}{0,.2,.8}
\definecolor{applegreen}{rgb}{0.55, 0.71, 0.0}
\definecolor{monrouge}{rgb}{0.8, 0.0, 0.0} 
\definecolor{green(ncs)}{rgb}{0.0, 0.62, 0.42}
\definecolor{indiagreen}{rgb}{0.07, 0.53, 0.03}
\definecolor{jade}{rgb}{0.0, 0.66, 0.42}
\definecolor{mediumseagreen}{rgb}{0.24, 0.7, 0.44}
\definecolor{tealgreen}{rgb}{0.0, 0.51, 0.5}
\definecolor{tealblue}{rgb}{0.21, 0.46, 0.53}
\definecolor{warmblack}{rgb}{0.0, 0.26, 0.26}
\definecolor{yaleblue}{rgb}{0.06, 0.3, 0.57}
\definecolor{prussianblue}{rgb}{0.0, 0.19, 0.33}
\definecolor{cadmiumgreen}{rgb}{0.0, 0.42, 0.24}
\definecolor{royalblue(traditional)}{rgb}{0.0, 0.14, 0.4}
\definecolor{midnightblue}{rgb}{0.1, 0.1, 0.44}
\begin{document}

\title{Pathwise guessing in categorical time series\\ with unbounded alphabets}

\author[1]{J.-R. Chazottes 
\thanks{Email: jeanrene@cpht.polytechnique.fr}}

\author[2]{S. Gallo
\thanks{Email: sandro.gallo@ufscar.br}}

\author[3]{D. Y. Takahashi
\thanks{Email: takahashiyd@gmail.com}}

\affil[1]{Centre de Physique Th\'eorique, CNRS, \'Ecole Polytechnique, Institut Polytechnique de Paris, France}

\affil[2]{Departamento de Estat\'istica, Universidade Federal de S\~ao Carlos, S\~ao Paulo, Brazil}

\affil[3]{Instituto do C\'erebro, Universidade Federal do Rio Grande do Norte, Natal, Brazil}

\maketitle

\begin{abstract}
The following learning problem arises naturally in various applications: Given a finite sample from a categorical or count time series, can we learn a function of the sample that (nearly) maximizes the probability of correctly guessing the values of a given portion of the data using the values from the remaining parts? Unlike classical approaches in statistical inference, our approach avoids explicitly estimating the conditional probabilities.

We propose a non-parametric guessing function with a learning rate independent of the alphabet size. Our analysis focuses on a broad class of time series models that encompasses finite-order Markov chains, some hidden Markov chains, Poisson regression for count processes, and one-dimensional Gibbs measures. 

We provide a margin condition that controls the rate of convergence for the risk. Additionally, we establish a minimax lower bound for the convergence rate of the risk associated with our guessing problem. This lower bound matches the upper bound achieved by our estimator up to a logarithmic factor, demonstrating its near-optimality. 

\bigskip

\noindent\textbf{Key-words:} stochastic chain of unbounded memory, countable alphabets, Dvoretzky-Kiefer-Wolfowitz type inequality, Markov chains, autoregressive models, hidden Markov chains, one-dimensional Gibbs 
measures, Poisson regression for count time series.  
\end{abstract}



\section{Introduction}

Prediction and interpolation, \emph{i.e.}, \emph{guessing}, are foundational problems in science with wide-ranging applications, particularly in categorical time series. Prediction involves estimating future values of a sequence based on prior observations, while interpolation focuses on inferring missing values within a sequence of known data. Both tasks aim to construct a function from observed data that can accurately predict or interpolate the missing components. These problems have garnered renewed interest due to the rise of large language models, which can be viewed as categorical time series models with large alphabets \cite{gruver2024large}. In this new context, both the alphabet size and the dependence order are typically high. Our work specifically addresses models with unbounded dependence and alphabet sizes, making them suitable for modern applications.

A common approach to these problems relies on estimating conditional probabilities. Classical prediction involves pointwise estimation of all transition probabilities, as discussed in \cite{morvai-weiss/2021}. 
Other studies, such as \cite{wolfer2019minimax} and \cite{hao2018learning}, use PAC learning frameworks to investigate optimal rates for learning conditional probabilities across various metrics, with \cite{han2023optimal} addressing $k$-step Markov chains with large alphabets using minimax Kullback-Leibler risk. This last article introduces alternative loss functions to mitigate sensitivity to rare transitions, showing that optimal prediction can be achieved with an alphabet size scaling as $\mathcal{O}(\sqrt{n}\,)$. Furthermore, parametric regression models, known for their flexibility and algorithmic efficiency \cite{fokianos1996categorical}, are another common approach to time series prediction. However, such models often impose restrictive assumptions on the underlying processes and may not efficiently solve the guessing problem, in general. In all these approaches, the guessing becomes challenging when the alphabet size is large or when transition probabilities are very small. Accurately estimating rare events requires a substantial amount of data, which can be impractical. 

A more practical approach is to focus on the events that are most likely to occur. This means focusing our efforts on predicting the most probable outcomes and giving less weight to rare, unlikely events. This approach is especially useful when dealing with sequences with a large or infinite number of possible symbols. Even with large sets of symbols, we expect that such a method can still make accurate predictions, unlike traditional methods that might struggle to estimate the probabilities of all possible transitions.

This paper presents a probabilistic guessing scheme that addresses these challenges. In guessing, we want to learn from the sample a function that correctly guesses the values on a given portion of the time series given the remaining parts. Our non-parametric estimator applies to a broad class of categorical time series \cite{kedem2005regression}, accommodating arbitrary alphabet sizes and relaxing constraints on memory or order. A key feature of our approach is that the learning rate of the associated risk is independent of the size of the alphabet, and we establish that this rate is essentially sharp. To the best of our knowledge, our work is the first to consider the infinite alphabet case, which is relevant when there is no prior upper bound on the alphabet size or the alphabet size can increase with the sample size.
The proof of the upper bound is based on a Dvoretzky-Kiefer-Wolfowitz type inequality, which we previously established in a slightly different form \cite{chazottes2023gaussian}. To obtain a sharp control on the convergence rate of the risk, we introduce a margin-type condition. In particular, when the margin decreases fast with the sample size $n$, we show that the risk converges as $\mathcal{O}(\sqrt{\log/n})$. On the other hand, when the margin is bounded away from zero, we prove an upper bound for the risk that is exponential in the sample size $n$. To prove the lower bound, we use the classical Le Cam type of argument \cite{yu1997cam}.

This paper is organized as follows. Section \ref{sec:guessing_scheme} introduces the basic notation and formalizes the probabilistic guessing scheme. Section \ref{sec:results} details our assumptions and presents the main theoretical results. Examples of time series models that satisfy our assumptions are discussed in Section \ref{sec:examples}, and the proofs of the main results are provided in Section \ref{sec:proofs}. 

\section{The probabilistic guessing problem}\label{sec:guessing_scheme}

We will now explain the formulation of the probabilistic guessing problem. Before that, let us introduce the notation.

\paragraph{Notation.} Let $\A$ be a countable set with cardinality at least $2$ (the set of categories, or the alphabet) endowed with the discrete topology and denote by $\A^{\Z}$  the set of bi-infinite sequences drawn from $\A$. We then put the product topology on $\A^\Z$. It is generated by the cylinder sets $[a_{-n+1},\ldots,a_{n-1}]=\{x\in \A^\Z : x_i=a_i, |i|\leq n-1\}$, $a_i\in \A$, $n\in\N$, and comprises all Borel sets of $\A^\Z$. 

We use the notation $\Subset$ to mean ``a finite subset of''.
Given $H \Subset \Z$, the diameter of $H$ is defined as $\text{diam}(H) := \max H - \min H + 1$.
For integers $m<n$, let $\llbracket m,n\rrbracket:=\left[m,n\right]\cap \Z$.

For any pair $x,y\in\A^\Z$, and any $\Lambda\Subset\Z$, the point $z=x_\Lambda y_{\Lambda^{\!\comp}}$ is defined by $z_i=x_i$ for $i\in\Lambda$, and $z_i=y_i$ for $i\notin \Lambda$.
We define the shift operator $\theta$ acting on subsets of $\Z$. Namely, for $H \subset \Z$ and $i\in\Z$, let $\theta^i H = H+i$, where $H+i=\{i+j:j\in H\}$. Similarly,  the shift operator $T$ on $\A^\Z$ is
defined by $(T^ix)_n=x_{n+i}$, for $i,n \in \Z$. 

Given a $\A$-valued process $(X_i)_{i\in\Z}$, we write $X_m^n=(X_m,X_{m+1},\ldots,X_n)$ ($m<n$), and more generally, for $G\subset \Z$, $X_G=(X_i)_{i\in G}$. We denote by $\sigma(X_G)$ the $\sigma$-algebra generated by $(X_i)_{i\in G}$. 

\paragraph{The probabilistic guessing problem.}

We want to find an estimator in which the probability of correctly guessing a given subset of symbols is close (with a precision we can choose) to the best possible guess. 
We will formulate this problem as follows. Let $(X_j)_{j \in \Z}$ and $(Y_j)_{j \in \Z}$ be two independent  copies of the same process with distribution $\P$. We want to use the information of the {\em data set} $D$ to predict the value on the {\em guess set} $G$. An estimator is a function 
\begin{align*}
\hat{f}^n_{\smallDG}:\A^n \times \A^D 
&\to \A^G\\
\hat{f}^n_{\smallDG}[x^n_1]
(y_{\scaleto{D}{4.5pt}}) 
&= y_{\scaleto{G}{4.5pt}}\,.
\end{align*}
Assuming that we observe a finite sample $X_1, \ldots, X_n$, we think of it as a random function, namely,
\[
y_{\scaleto{D}{4.5pt}}\mapsto\hat{f}^n_{\smallDG}[X^n_1](y_{\scaleto{D}{4.5pt}}):=\hat{f}^n_{\smallDG}(y_{\scaleto{D}{4.5pt}})\,.
\] 
Two natural cases are the ``prediction'' problem, which corresponds to the case $\max D<\min G$,
and the ``interpolation''  problem, which corresponds to the case $G\subset \llbracket\,\min D,\max D\,\rrbracket$.

To measure the performance of the estimator $\hat{f}^n_{\smallDG}$, we define the {\em excess risk} as  
\begin{align} \label{eq:risk}
& R(\hat{f}^n_{\smallDG}) \\
&:= \sup_{b\, \in \A^D}\!\Big(\widetilde{\P}\big(\hat{f}^n_{\smallDG}(Y_D) \neq  Y_G \mid Y_D = b\big) 
- \!\inf_{a\, \in \A^G}\widetilde{\P}\big(a \neq  Y_G \mid Y_D = b\big)\Big)\,\widetilde{\P}(Y_D = b), \notag
\end{align}
where $\widetilde{\P} = \P \otimes \P$ (product measure).
Note that $R(\hat{f}^n_{\smallDG})$ depends on $\P$, although we do not explicitly indicate this in the notation as long as $\P$ is 
fixed. The above risk is designed to uniformly control the probability of making the correct guess for each data
$\{Y_D = b\}$ weighted by the probability that we observe such data.
 
We will discuss some alternative risks to better understand the choice of our risk in \eqref{eq:risk}. An alternative risk could be the ``unweighted'' risk:
\begin{align*} 
\sup_{b\, \in \A^D} \widetilde{\P}\big(\hat{f}^n_{\smallDG}(Y_D) \neq Y_G \mid Y_D = b\big) 
- \inf_{a\, \in \A^G} \widetilde{\P}\big(a \neq Y_G \mid Y_D = b\big)\,. \notag
\end{align*} 
The drawback of this alternative is that it requires controlling errors even when $\widetilde{\P}(Y_D = b)$ is small. This typically requires larger sample sizes with minimal improvement in the overall probability of guessing.

Another natural risk can be defined using the function $f: \A^D \to \A^G$, where
\[ 
f(b) = \argmin_{a\, \in \A^G} \P\big(a \neq Y_G \mid Y_D = b\big)\,.
\] 
The minimum is guaranteed to exist since $\P$ is a probability measure. A seemingly equivalent risk to \eqref{eq:risk} is then given by
\begin{align*} 
\sup_{b\, \in \A^D} \widetilde{\P}\big(\hat{f}^n_{\smallDG}(Y_D) \neq f(Y_D), Y_D = b\big)\,.
\end{align*}
However, this choice has its own caveat. When several symbols have probabilities close to the minimum, estimating $f$ becomes more challenging. Despite this, these symbols yield similar guessing errors, making it unnecessary to distinguish between them.

Consider a simple example: a biased die where the side labeled with the symbol ``one'' has the highest probability of 
appearing. If each roll is independent, the optimal prediction is to always guess ``one''. In this scenario, it suffices 
to correctly identify that ``one'' has the highest probability; there is no need to precisely estimate the probabilities 
of the other symbols. Furthermore, the total number of symbols on the die is irrelevant to the accuracy of this 
prediction.

Now, suppose the probabilities for all symbols on the die are nearly identical. In this case, any symbol can be chosen, and the probability of a correct guess will still be close to the theoretical optimum. Here, even though it might be challenging to identify the symbol with the highest probability, the exact identification might be less relevant, as the prediction error 
is negligible. Such situations are common when the number of categories is large.

Finally, consider a die with a Markovian dynamics, where the outcome of each roll depends on the previous one. In this 
case, the optimal guess depends on the past observations, with the accuracy of the guess tied to the probabilities 
associated with prior outcomes. When a past outcome has a low probability, it becomes less critical to make an accurate 
prediction because the event requiring a guess will occur rarely. Moreover, even when rare events happen, the chance of correctly predicting them remains slim. Our proposed formulation explicitly captures and formalizes this idea.

To conclude with our formulation of the guessing problem, we observe that it shares similarities with non-parametric multiclass classification \cite{del2022multiclass}. However, a key distinction is that we do not aim to classify all labels accurately. Instead, we focus only on those labels with a high probability of occurrence, enabling us to extend the method to cases involving countably infinite 
alphabets. Moreover, for multiclass classification, the samples are usually i.i.d., which is not our case. 

\paragraph{The estimator.}

We will use the following estimator.

\begin{defn}\label{def:estimator}
Given $n\geq1$, $X^n_1$, $G,D\Subset \Z$, such that $\min (G \cup D) =1$, and $b \in \A^{D}$, let
\begin{align}\label{eq:estimator}
\hat{f}_{\smallDG}^n[X_1^n](b):= \argmax_{a\, \in\, \A^G} \frac{N^n_{\smallDG}[X_1^n](b,a)}{N^n_{\smallDG}[X_1^n](b)} = \argmax_{a\, \in\, \A^G} \frac{N^n_{\smallDG}[X_1^n](b,a)}{n}\,,
\end{align}
where
\[
N^n_{\smallDG}[X_1^n](b,a): = \sum_{i = 0}^{n-1} \1 \{X_{\theta^i D} = b, X_{\theta^i G} = a\}
\]
and
\[
N^n_{\smallDG}[X_1^n](b): = \sum_{a \in \A^G} N^n_{\smallDG}[X_1^n](b,a)
\]
are the counting functions, with the convention that the indicator function $\1\{\}$ is zero if $\theta^iD$ or $\theta^i G$ is not included in $\llbracket 1,n\rrbracket$. We will often omit the reference to the sample on which we count occurrences, writing $\hat{f}_{\smallDG}^n(b)$ and $N^n_{\smallDG}(b,a)$. The key observation for our estimator is that the maximizing argument for the conditioned empirical probability is equal to the maximizing argument for the unconditioned empirical probability.
\end{defn}

\section{Assumption and statement of the results} \label{sec:results}

Our main assumption about the categorical process concerns the degree of dependence it exhibits on its ``past''.
Let us denote by $p$ a regular version of the left conditional expectations of the process $(X_i)_{i\in\Z}$ with 
distribution $\P$, that is, $p:\A\times \A^{\Z_-}\rightarrow[0,1]$ is such that
\begin{equation}\label{eq:conditionaldef}
\E\Big({\1}_{\{X_0=a\}}\,\big\vert\,\sigma\big(X_{-\infty}^{-1}\big)\Big)(x)=p(a|x),\;\;\P\text{-a.s.},
\end{equation}
where $\Z_-:=\{-1, -2, \ldots\}$ is the set of negative integers.
We say that, for $\P$-almost all ``pasts'' $x\in \A^{\Z_-}$, $p$ specifies the transition probabilities to any symbol of $\A$. Let us also define the \emph{variation of $p$}, $(\text{Var}_n(p))_{n\ge0}$, as 
\[
\text{Var}_n(p):=
\sup\left\{\scaleto{\frac{1}{2}}{18pt}\sum_{a\,\in\A} \big|\,p(a|x)-p(a|y)\big|:x,y\in \A^{\Z_-}, 
x_i=y_i,i\ge-n\right\}\,,n\geq 0\,,
\]
with the convention that for $n=0$ the supremum is taken over any pair $x,y\in \A^{\Z_-}$ without restriction. 
Our main regularity assumption concerning $p$ is that $\text{Var}_n(p)$ goes to $0$ sufficiently fast when $n$ diverges. 
\begin{assumption}\label{eq:Gammadef}
Let $(X_i)_{i\in\Z}$ be a stationary process with measure $\P$. We say that $(X_i)_{i\in\Z}$ satisfies Assumption A if 
\[
\Gamma(\P) := \prod_{j=0}^\infty\big(1-\Var_j(p)\big)>0.
\]
\end{assumption}
Observe that Assumption \ref{eq:Gammadef} is equivalent to having 
\[
\Var_0(p) < 1\quad \text{and}\quad \sum_{j \geq 1} \Var_j(p) < \infty.
\]

Stationary processes defined by \eqref{eq:conditionaldef} are commonly referred to in the literature as chains of infinite order, chains with complete connections, stochastic chains with unbounded memory, or $g$-measures (in the context of ergodic theory); see {\em e.g.} \cite{fernandez/maillard/2005}. An insightful reference that examines these objects in the context of categorical time series is \cite{fokianos2019categorical}. 
Assumption \ref{eq:Gammadef} aligns with classical conditions from the literature, providing a framework to ensure that 
conditional expectations are not overly influenced by distant past events. This assumption captures a wide range of well-known models, including Markov chains and generalized linear models for categorical and count time series, one-dimensional Gibbs measures, as detailed in Section \ref{sec:examples}.

We now introduce a quantity that will play a key role in controlling the convergence rate of the risk.
Given $D,G\Subset \Z$, for each $b \in \A^D$, let 
\begin{align*}
& \scaleto{\delta}{8pt}_{\smallDG}(b) = \\
&\quad \inf\big\{ \P(X_G \neq c, X_{D} = b) - \inf_{a \in \A^G}\P(X_G \neq a, X_{D} = b) > 0: c \in \A^G \big\}.
\end{align*}
If the set inside the infimum is empty, we take  $\scaleto{\delta}{8pt}_{\smallDG}(b) = 0$. We define the \emph{margin}
\[
\delta_{\smallDG} := \inf_{b\, \in \A^D} \delta_{\smallDG}(b)\,,
\]
which quantifies the difficulty of the guessing problem, analogous to the margin condition in classification problems. 

Next, we introduce a quantity that controls the convergence rate of risk with respect to $D$ and $G$. Namely, let
\[
\beta_{\smallDG} = \sup_{b\, \in A^D}\bigg\{\sup_{a\, \in A^G}\P\big(Y_G =a, Y_D = b \big)-\inf_{c\, \in A^G}\P\big(Y_G =c, Y_D = b \big)\bigg\}.
\]
 To understand the relation between $\beta_{\smallDG}$ and the size of $D$ and $G$, we have the following result. 
 
 \begin{prop} \label{prop:expupperbound}
 If the left conditional probability $p$ of a process with distribution $\P$ is such that 
 \begin{equation} \label{eq:exponentialUpper}
 \bar{p} :=\sup_{x\, \in \A^{\Z_-}} \sup_{a\,\in \A} \, p(a|x) < 1,
 \end{equation}
 then 
 \begin{equation*}
 \beta_{\smallDG} \leq \bar{p}^{\,(|D|+|G|)}.
 \end{equation*}
 \end{prop}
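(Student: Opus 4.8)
The plan is to discard the infimum appearing in the definition of $\beta_{\smallDG}$, reduce the statement to a uniform exponential bound on the cylinder probabilities of $\P$, and then obtain that bound by peeling off coordinates one at a time using the defining relation \eqref{eq:conditionaldef} of $p$. For the first step, since $\inf_{c\in\A^G}\P(Y_G=c,Y_D=b)\ge 0$, we have
\[
\beta_{\smallDG}\ \le\ \sup_{b\in\A^D}\ \sup_{a\in\A^G}\P\big(Y_G=a,\,Y_D=b\big).
\]
As $Y$ has law $\P$ and the data set $D$ and guess set $G$ are disjoint, each pair $(a,b)$ with $a\in\A^G$ and $b\in\A^D$ determines a configuration $w$ on $D\cup G$ such that $\{Y_G=a,\,Y_D=b\}=\{X_{D\cup G}=w\}$, while $|D\cup G|=|D|+|G|$. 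Hence it suffices to prove the purely local statement
\[
\P\big(X_\Lambda=w\big)\ \le\ \bar p^{\,|\Lambda|}\qquad\text{for every }\Lambda\Subset\Z\text{ and every }w\in\A^{\Lambda}.
\]

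For the second step, enumerate $\Lambda=\{i_1<i_2<\cdots<i_k\}$ with $k=|\Lambda|$ and induct on $k$, removing the largest index at each stage. The event $\{X_{i_1}=w_{i_1},\dots,X_{i_{k-1}}=w_{i_{k-1}}\}$ is $\sigma\big(X_{-\infty}^{i_k-1}\big)$-measurable because $i_{k-1}\le i_k-1$, so by the tower property
\[
\P\big(X_\Lambda=w\big)=\E\Big[\1\{X_{i_1}=w_{i_1},\dots,X_{i_{k-1}}=w_{i_{k-1}}\}\cdot\E\big[\1\{X_{i_k}=w_{i_k}\}\ \big\vert\ \sigma(X_{-\infty}^{i_k-1})\big]\Big].
\]
By stationarity together with \eqref{eq:conditionaldef}, the inner conditional expectation equals, $\P$-almost surely, the value of $p(w_{i_k}\mid\cdot)$ at the past of $X$ up to time $i_k-1$ (suitably reindexed), which is at most $\bar p$ by \eqref{eq:exponentialUpper}; therefore $\P(X_\Lambda=w)\le\bar p\cdot\P\big(X_{\Lambda\setminus\{i_k\}}=w_{\Lambda\setminus\{i_k\}}\big)$. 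Iterating $k$ times — the base case $k=1$ being $\P(X_{i_1}=w_{i_1})=\E\big[p(w_{i_1}\mid X_{-\infty}^{i_1-1})\big]\le\bar p$ — yields $\P(X_\Lambda=w)\le\bar p^{\,k}$, and combined with the first step this gives $\beta_{\smallDG}\le\bar p^{\,|D|+|G|}$.

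The only genuinely delicate point is the measure-theoretic identification used in the induction step: one must verify that conditioning $X_{i_k}$ on the \emph{whole} past $\sigma(X_{-\infty}^{i_k-1})$ (not merely on a finite past) produces a regular conditional probability that, by stationarity, agrees $\P$-a.s.\ with $p$, and that the bound $p\le\bar p$ — which \eqref{eq:exponentialUpper} asserts for \emph{all} pasts, including those of $\P$-measure zero — consequently passes to an almost-sure bound on that conditional expectation. Once this is granted, the argument is a short finite induction; in particular it uses neither Assumption~\ref{eq:Gammadef} nor any decay rate of $\Var_n(p)$.
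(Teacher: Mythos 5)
Your proof is correct and follows essentially the same route as the paper: drop the infimum, write $\P(Y_G=a,Y_D=b)$ as a product obtained by conditioning each coordinate of $G\cup D$ on the preceding ones, and bound each factor by $\bar p$ using \eqref{eq:exponentialUpper}. Your induction via the tower property over the full past $\sigma(X_{-\infty}^{i_k-1})$ merely makes explicit the justification of the factorwise bound that the paper's chain-rule display leaves implicit, and like the paper it tacitly uses $G\cap D=\emptyset$ so that $|G\cup D|=|G|+|D|$.
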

 Hence, condition \eqref{eq:exponentialUpper} implies that $\beta_{\smallDG}$ decreases exponentially with $|D|+|G|$. We will see in Section \ref{sec:examples} that condition \eqref{eq:exponentialUpper} is satisfied for large classes of models.

We are now ready to state our results. 

\begin{thm} \label{theo:probguess}
Let $G,D\Subset \Z$, and put
$L_{\smallDG} :=\mathrm{diam}(G\cup D)$,
$K_{\smallDG}:=|G|+|D|$. Let $(X_j)_{j \in \Z}$ be a stationary process with measure $\P$ satisfying Assumption  \ref{eq:Gammadef} and margin $\delta_{\smallDG}$.
Let $\epsilon > 0$. If 
\begin{equation} \label{eq:samplesize}
n\geq \frac{4}{(\frac{\epsilon}{2} \vee \delta_{\smallDG})^{2}}\Bigg(\!\!\left(\frac{K_{\smallDG}}{\Gamma(\P)}\right)^2\!\log\left(\frac{2\beta_{\smallDG}}{\epsilon}\!\right)+\frac{4K_{\smallDG}(1-\Gamma(\P))}{\Gamma(\P)}+2\!\Bigg)+L_{\smallDG}-2,
\end{equation}
then
\begin{equation}\label{Shosta}
R\big(\hat{f}^n_{\smallDG}\big) \leq \epsilon \wedge \beta_{\smallDG}\,.
\end{equation}
\end{thm}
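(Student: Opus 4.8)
The plan is to reduce the excess risk $R(\hat f^n_{\smallDG})$ to a statement about the empirical frequencies $N^n_{\smallDG}(b,a)/n$ being uniformly close to the true joint probabilities $\P(X_{\theta^0 D}=b, X_{\theta^0 G}=a)$, and then invoke the Dvoretzky--Kiefer--Wolfowitz type inequality from \cite{chazottes2023gaussian} to control that uniform deviation. The first step is a deterministic argument: fix $b\in\A^D$ and suppose that
\begin{equation*}
\sup_{a\in\A^G}\Big|\frac{N^n_{\smallDG}(b,a)}{n}-\P(X_D=b,X_G=a)\Big| \leq \eta
\end{equation*}
for a threshold $\eta$ to be chosen. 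By the key observation in Definition \ref{def:estimator}, $\hat f^n_{\smallDG}(b)$ maximizes $a\mapsto N^n_{\smallDG}(b,a)/n$, which is within $\eta$ of the true joint mass; hence the symbol $\hat a:=\hat f^n_{\smallDG}(b)$ satisfies $\P(X_G=\hat a,X_D=b)\geq \sup_a\P(X_G=a,X_D=b)-2\eta$. Equivalently, in terms of the ``$\neq$'' probabilities in \eqref{eq:risk}, $\P(\hat a\neq X_G, X_D=b)\leq \inf_a\P(a\neq X_G,X_D=b)+2\eta$. Now the margin enters: by the definition of $\delta_{\smallDG}(b)$, any symbol $c$ that is \emph{not} a true minimizer has $\P(c\neq X_G,X_D=b)\geq \inf_a(\cdots)+\delta_{\smallDG}(b)$, so if $2\eta < \delta_{\smallDG}(b)$ then $\hat a$ must itself be a true minimizer, giving zero contribution to the risk from this $b$; and in the complementary regime where $\delta_{\smallDG}(b)$ is small, the crude bound $2\eta$ still controls the per-$b$ contribution. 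Taking the $\sup$ over $b$ and weighting by $\widetilde\P(Y_D=b)$ (which sums to one), this shows $R(\hat f^n_{\smallDG})\leq 2\eta$ on the event that the uniform deviation is at most $\eta$ for every $b$ simultaneously — and one should pick $\eta$ comparable to $\tfrac12(\tfrac\epsilon2\vee\delta_{\smallDG})$ so that both regimes yield a bound of order $\epsilon$.

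The second step is probabilistic: bound $\P\big(\exists b,a:\ |N^n_{\smallDG}(b,a)/n-\P(X_D=b,X_G=a)|>\eta\big)$. This is exactly where the DKW-type inequality of \cite{chazottes2023gaussian} is used — it provides a uniform-over-all-cylinders concentration bound for empirical frequencies of a stationary chain satisfying Assumption \ref{eq:Gammadef}, with the rate governed by $\Gamma(\P)$, and crucially \emph{no dependence on the alphabet size}. One applies it to the ``merged'' symbol indexed by the pair $(b,a)\in\A^{D\cup G}$ (of effective length $K_{\smallDG}=|G|+|D|$, sitting inside a window of diameter $L_{\smallDG}$), so the relevant parameters are $K_{\smallDG}$, $L_{\smallDG}$, $\Gamma(\P)$, and the deviation level $\eta$. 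Matching the tail bound to the required confidence — and here is where the $\log(2\beta_{\smallDG}/\epsilon)$ term should appear, since one only needs the failure probability to be below something like $\epsilon/\beta_{\smallDG}$ (the risk being trivially at most $\beta_{\smallDG}$ by Proposition \ref{prop:expupperbound}-type reasoning, so one can afford to "pay" $\beta_{\smallDG}$ times the failure probability) — yields precisely the sample-size condition \eqref{eq:samplesize}, with the $K_{\smallDG}^2/\Gamma(\P)^2$, $K_{\smallDG}(1-\Gamma(\P))/\Gamma(\P)$, and $L_{\smallDG}-2$ terms being the explicit constants coming out of the DKW-type bound. Finally, on the failure event one bounds the contribution to $R$ by $\beta_{\smallDG}$ (the worst possible value of the bracket in \eqref{eq:risk}), so the total risk is at most $2\eta + \beta_{\smallDG}\cdot(\text{failure prob})\leq \epsilon\wedge\beta_{\smallDG}$ after the parameter choices, and the separate ``$\wedge\beta_{\smallDG}$'' claim follows because $R\leq\beta_{\smallDG}$ always.

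I expect the main obstacle to be the careful bookkeeping in step one: handling the two regimes of the margin uniformly in $b$ while getting the constants to line up with $(\tfrac\epsilon2\vee\delta_{\smallDG})$, and correctly accounting for the edge effect that not all $n$ shifts of $D\cup G$ fit inside $\llbracket 1,n\rrbracket$ (which replaces the effective sample size $n$ by roughly $n-L_{\smallDG}+1$, explaining the $+L_{\smallDG}-2$ in \eqref{eq:samplesize}). A secondary technical point is verifying that the DKW-type inequality from \cite{chazottes2023gaussian} applies in the exact form needed here — in particular that it controls deviations of \emph{joint} block frequencies uniformly over the (possibly infinite) product alphabet $\A^{D\cup G}$ — and extracting from it the precise constants that produce \eqref{eq:samplesize}; the rest of the argument is then a matter of assembling these two pieces.
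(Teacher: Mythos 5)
Your proposal follows essentially the same route as the paper's proof: a deterministic step showing that uniform closeness of the empirical joint frequencies $N^n_{\smallDG}(b,a)/n$ to $\P(X_G=a,X_D=b)$ at level $\frac{\epsilon}{4}\vee\frac{\delta_{\smallDG}}{2}$ forces the estimator into the set of (near-)optimal guesses for every $b$, combined with the trivial bound $R\leq\beta_{\smallDG}$ and the DKW-type inequality of Theorem \ref{theo:DKW} applied to the pattern on $G\cup D$ with failure probability calibrated to roughly $\epsilon/(2\beta_{\smallDG}(\epsilon))$, which is exactly how \eqref{eq:samplesize} arises. Apart from minor bookkeeping differences (the paper splits the per-$b$ excess over symbols inside and outside the set $\gamma_{\smallDG}(b,\epsilon/2)$, obtaining $\epsilon/2+\epsilon/2$, rather than your ``good event plus $\beta_{\smallDG}\cdot$failure probability'' accounting), your plan is correct and matches the paper's argument.
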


A direct consequence is that the convergence rate of the risk depends on the relationship between $\epsilon/2$ and $\delta_{\smallDG}$, as well as the relationship between $\epsilon/2$ and $\beta_{\smallDG}$. In what follows, we will study some different regimes for the convergence of the risk. We consider a family of stationary processes $(\P_n)_{n \geq 1}$, which allows us to consider an increasing set of probability measures when the sample size $n$ increases. Hence, the margin $\delta_{\smallDG}$ may depend on $n$, and we introduce the shorthand
\begin{equation}\label{deltan}
\delta_n := \delta_{\smallDG}^{(n)}\,,\; n \ge 1\,.
\end{equation}

In the corollary below, the risk
$R(\hat{f}^n_{\smallDG})$ is with respect to the measure $\P_n$. 

\begin{coro} \label{coro:rate}
Fix $G,D\Subset \Z$. Let $(\P_n)_{n \geq 1}$ be a family of measures for stationary processes with margin $\delta_n$ (defined in \eqref{deltan}).  If $\Gamma(\P_n) \geq \Gamma >0$ for all $n \geq 2$ and if
\begin{equation} \label{eq:subcritical}
    \delta_{n} \sqrt{\frac{n}{\log n}} \rightarrow 0\,,
\end{equation}
then, there exists $n_0 \geq 2$ such that for all $n \geq n_0$,
\begin{equation*}
 R(\hat{f}^n_{\smallDG})  \leq \frac{1}{2}\sqrt{\frac{\log n}{ n }}\wedge\beta_{\smallDG}\,.
\end{equation*}
On the other hand, if   
\begin{equation} \label{eq:supercritical}
    \delta_{n} \sqrt{\frac{n}{\log n}} \rightarrow \infty 
\end{equation}
then, there exists $n_0' \geq 1$ such that for all $n \geq n_0'$,
\begin{equation*}
R(\hat{f}^n_{\smallDG})  \leq \exp\left(- \frac{\Gamma^2\,n\, \delta_n^2}{8(|G|+|D|)^2}\right) \wedge \beta_{\smallDG}\,.
\end{equation*}
\end{coro}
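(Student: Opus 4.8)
The plan is to obtain both estimates by applying Theorem~\ref{theo:probguess} to $\P=\P_n$ with a precision $\epsilon=\epsilon_n$ tailored to each regime, and to read off $n_0$ (respectively $n_0'$) from the sample-size condition~\eqref{eq:samplesize}. The starting observation is that, $G$ and $D$ being fixed, $K_{\smallDG}$ and $L_{\smallDG}$ are constants and $\beta_{\smallDG}\le 1$ always; moreover $\Gamma(\P_n)\ge\Gamma>0$ by hypothesis, and the maps $x\mapsto(K_{\smallDG}/x)^2$ and $x\mapsto K_{\smallDG}(1-x)/x$ are non-increasing on $(0,1]$, so replacing $\Gamma(\P_n)$ by $\Gamma$ on the right-hand side of~\eqref{eq:samplesize} only enlarges it. Hence~\eqref{eq:samplesize} reduces to a one-variable asymptotic inequality in $n$ with $\epsilon_n$ the only free parameter, and whenever it holds Theorem~\ref{theo:probguess} gives $R(\hat{f}^n_{\smallDG})\le\epsilon_n\wedge\beta_{\smallDG}$.

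For the subcritical case I would take $\epsilon_n:=\tfrac12\sqrt{\log n/n}$. Condition~\eqref{eq:subcritical} is precisely $\delta_n=o(\sqrt{\log n/n})$, so $\delta_n<\epsilon_n/2$ for all large $n$ and the maximum in~\eqref{eq:samplesize} reduces to $\tfrac{\epsilon_n}{2}\vee\delta_n=\tfrac{\epsilon_n}{2}$. Substituting $\epsilon=\epsilon_n$, using $\beta_{\smallDG}\le 1$ so that $\log(2\beta_{\smallDG}/\epsilon_n)=\tfrac12\log n+O(\log\log n)$, and bounding $\Gamma(\P_n)$ below by $\Gamma$, the right-hand side of~\eqref{eq:samplesize} becomes an expression of the form $c\,\tfrac{n}{\log n}\bigl(\log n+O(1)\bigr)+L_{\smallDG}-2$ with $c=c(K_{\smallDG},\Gamma)$; this is of order $n$, so a routine computation confirms that~\eqref{eq:samplesize} holds for all $n$ past some $n_0$. (Heuristically, the rate $\sqrt{\log n/n}$ is exactly the solution of the balance $\epsilon_n^2\,n\asymp\log(1/\epsilon_n)$.) Theorem~\ref{theo:probguess} then yields $R(\hat{f}^n_{\smallDG})\le\epsilon_n\wedge\beta_{\smallDG}=\tfrac12\sqrt{\log n/n}\wedge\beta_{\smallDG}$.

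For the supercritical case I would take $\epsilon_n:=\exp\!\bigl(-\tfrac{\Gamma^2 n\delta_n^2}{8K_{\smallDG}^2}\bigr)$, which tends to $0$. Condition~\eqref{eq:supercritical} forces $\delta_n\ge\sqrt{\log n/n}$ eventually, so $\delta_n$ decays at most polynomially and $|\log\delta_n|\le\log n$ for large $n$, whereas $\log(1/\epsilon_n)=\tfrac{\Gamma^2 n\delta_n^2}{8K_{\smallDG}^2}$ grows faster than $\log n$ since $n\delta_n^2/\log n\to\infty$. Hence $\log(1/\epsilon_n)\ge|\log\delta_n|$ for large $n$, i.e.\ $\epsilon_n\le\delta_n$, so this time the maximum in~\eqref{eq:samplesize} degenerates the other way: $\tfrac{\epsilon_n}{2}\vee\delta_n=\delta_n$. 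Substituting $\epsilon=\epsilon_n$, using $\beta_{\smallDG}\le 1$ so that $\log(2\beta_{\smallDG}/\epsilon_n)\le\log 2+\tfrac{\Gamma^2 n\delta_n^2}{8K_{\smallDG}^2}$, and $\Gamma(\P_n)\ge\Gamma$, the right-hand side of~\eqref{eq:samplesize} is at most
\[
\frac{4}{\delta_n^2}\Bigl(\frac{K_{\smallDG}}{\Gamma}\Bigr)^{2}\frac{\Gamma^2 n\delta_n^2}{8K_{\smallDG}^2}+\frac{C}{\delta_n^2}+L_{\smallDG}-2=\frac{n}{2}+\frac{C}{\delta_n^2}+L_{\smallDG}-2
\]
for a constant $C=C(K_{\smallDG},\Gamma)$; since $\delta_n^2\ge\log n/n$ eventually, $C/\delta_n^2=o(n)$, so the whole expression is $\le n$ for all $n$ past some $n_0'$. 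Theorem~\ref{theo:probguess} then gives $R(\hat{f}^n_{\smallDG})\le\epsilon_n\wedge\beta_{\smallDG}=\exp\!\bigl(-\tfrac{\Gamma^2 n\delta_n^2}{8(|G|+|D|)^2}\bigr)\wedge\beta_{\smallDG}$.

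The step requiring care --- and the main obstacle --- is to pick $\epsilon_n$ so that the maximum $\tfrac{\epsilon}{2}\vee\delta_n$ in~\eqref{eq:samplesize} degenerates in the desired direction ($\tfrac{\epsilon_n}{2}$ subcritically, $\delta_n$ supercritically) \emph{while} keeping $\epsilon_n$ no larger than the asserted bound; equivalently, one must verify that the lower-order terms left in~\eqref{eq:samplesize} after the substitution are negligible compared with $n$, which is exactly where the quantitative hypotheses~\eqref{eq:subcritical} ($\delta_n=o(\sqrt{\log n/n})$) and~\eqref{eq:supercritical} ($n\delta_n^2/\log n\to\infty$) enter.
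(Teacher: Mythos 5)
Your overall route is exactly the paper's: apply Theorem \ref{theo:probguess} with a regime-dependent precision $\epsilon_n$ and extract $n_0$, $n_0'$ from the sample-size condition \eqref{eq:samplesize}. In the supercritical regime you are in fact more explicit than the paper, which merely asserts that \eqref{eq:samplesize} is satisfied: your choice $\epsilon_n=\exp\bigl(-\Gamma^2 n\delta_n^2/(8K_{\smallDG}^2)\bigr)$, the check that $\epsilon_n\le\delta_n$ eventually (so that $\tfrac{\epsilon_n}{2}\vee\delta_n=\delta_n$), and the reduction of the right-hand side of \eqref{eq:samplesize} to $\tfrac{n}{2}+o(n)$ supply precisely the details the paper omits, and that part is sound.

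The gap is in your subcritical step, at the sentence ``this is of order $n$, so a routine computation confirms that \eqref{eq:samplesize} holds''. Being of order $n$ is not enough; the constant in front of $n$ decides the matter, and here it is too large. With $\epsilon_n=\tfrac12\sqrt{\log n/n}$ and $\tfrac{\epsilon_n}{2}\vee\delta_n=\tfrac{\epsilon_n}{2}$, the prefactor is $4/(\epsilon_n/2)^2=64\,n/\log n$, while $\log(2\beta_{\smallDG}/\epsilon_n)=\tfrac12\log n\,(1+o(1))$ whenever $\beta_{\smallDG}$ stays bounded away from $0$ (if instead $\beta_{\smallDG}\le\epsilon_n$ the claimed bound is trivial, since $R\le\beta_{\smallDG}$ always). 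Hence the dominant term of the right-hand side of \eqref{eq:samplesize} is $\tfrac{32\,K_{\smallDG}^2}{\Gamma^2}\,n\,(1+o(1))$, and since $K_{\smallDG}\ge1\ge\Gamma$ this exceeds $n$ for all large $n$: the condition \eqref{eq:samplesize} never holds with this $\epsilon_n$, so Theorem \ref{theo:probguess} cannot be invoked for it. What the argument actually delivers is $R(\hat f^n_{\smallDG})\le C\sqrt{\log n/n}\wedge\beta_{\smallDG}$ for any constant $C>2\sqrt2\,K_{\smallDG}/\Gamma$, i.e.\ a constant depending on $K_{\smallDG}$ and $\Gamma$, not the constant $\tfrac12$ in the statement. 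To be fair, the paper's own proof makes the same unchecked assertion (its $\epsilon_n$ even carries the typo $\sqrt{n/\log n}$ in place of $\sqrt{\log n/n}$), so you have reproduced the paper's reasoning rather than introduced a new error; but as written your justification of the first bound does not go through, and a correct write-up must either track the constant (replacing $\tfrac12$ by $C(K_{\smallDG},\Gamma)$) or add an assumption forcing $\log(2\beta_{\smallDG}/\epsilon_n)=O(1)$.
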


An important special case is when $\delta_n = 0$ for all $n$, in which case the risk converges as $\mathcal{O}\big(\sqrt{n^{-1}\log n}\,\big)$. For instance, if the alphabet is infinite and $D \neq \emptyset$, then $\delta_{n} = 0$. When the alphabet is finite, we can have $\delta_n > 0$. A relevant case is when $\delta_n \geq \Delta > 0$ for all $n$, in which case the risk converges as $\mathcal{O}(\e^{-Cn})$ for some positive $C$.

Now, we want to understand whether the learning rate achieved in this corollary is optimal. The result below establishes that, 
up to a logarithmic factor, our estimator attains the minimax optimal rate of convergence for the risk with respect to the sample size $n$. 

To state the result, we let $\EuScript{P}$ denote the set of distributions of all stationary stochastic processes on alphabet $\A$ whose left conditional expectation satisfies Assumption  \ref{eq:Gammadef}.
In this statement, we will explicitly write the dependence on $\P$ of the excess risk (see \eqref{eq:risk}).

\begin{thm} \label{theo:lowerbound}
Fix $G,D\Subset \Z$. For every positive integer $n$, let $\Psi_n$ be the set of estimators $\psi_n:\A^n \times \A^D \to \A^G$. 
If $\EuScript{P}\!_n$ is the set of probability measures that satisfy $\inf_{\P\!_n \in \EuScript{P}\!_n}\Gamma(\P\!_n) \geq \Gamma > 0$ and whose margins are bounded above by $\delta_n$ with
\begin{equation} 
    \delta_{n} \sqrt{\frac{n}{\log n}} \rightarrow 0\,,
\end{equation}
then for all $n \geq 2$,
\begin{equation}
\inf_{\psi_n \in \Psi\!_n}\, \sup_{\P \in \EuScript{P}\!_n} R(\psi_n;\P)  \geq {\frac{\e^{-1}}{\sqrt n }}\Big(\frac{1}{4}\Big)^{|G|+|D|}.
\end{equation}

On the other hand, if $\EuScript{Q}_n$ is the set of probability measures that satisfy $\inf_{\P\!_n \in \EuScript{Q}_n}\Gamma(\P\!_n) \geq \Gamma > 0$ and whose margins are bounded below by $\delta_n$ with
\begin{equation} 
    \delta_{n} \sqrt{\frac{n}{\log n}} \rightarrow \infty\,,
\end{equation}
then for all $n \geq 2$,
\begin{equation}
\inf_{\psi_n \in \Psi\!_n}\, \sup_{\P \in \EuScript{Q}_n} R(\psi_n;\P)  \geq \delta_n\e^{-n\delta_n^2}\left(\frac{1}{4}\right)^{|D|+|G|}.
\end{equation}
\end{thm}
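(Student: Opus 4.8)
The plan is to prove both minimax lower bounds via Le Cam's two-point method: in each regime we exhibit two measures $\P_0,\P_1 \in \EuScript{P}_n$ (resp.\ $\EuScript{Q}_n$) that are statistically hard to distinguish from a sample of size $n$, yet for which no single estimator can have small excess risk under both. The ideal construction is to keep the processes as simple as possible --- i.i.d.\ (order-zero) chains --- so that Assumption A holds trivially with $\Gamma(\P)=1$, and to concentrate all the ``action'' on a single configuration $b\in\A^D$ of the data set and two candidate symbols $a_0,a_1\in\A^G$ on the guess set. Concretely, I would let $\P_\eta$ (for $\eta\in\{0,1\}$) be the i.i.d.\ law on $\A^\Z$ for which the block $X_{\theta^i D}=b$ and $X_{\theta^i G}=a_\eta$ has slightly elevated probability, with the asymmetry between $a_0$ and $a_1$ tuned to a small parameter that I will call $\rho$ (to be chosen as $\rho\asymp 1/\sqrt n$ in the first regime, and $\rho\asymp\delta_n$ in the second). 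All other symbols are spread out so that $\P_\eta\in\EuScript{P}_n$, i.e.\ so that $\Gamma(\P_\eta)\geq\Gamma$ and the margin constraint is met; the factor $(1/4)^{|D|+|G|}$ in the bound is exactly what one loses by forcing $\widetilde\P(Y_D=b)$ and the block probabilities to be bounded below on an alphabet of the relevant size, so I would reverse-engineer the construction to produce that constant.

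The key steps, in order, are: (i) define $\P_0,\P_1$ as above and verify membership in the relevant class --- for $\EuScript{P}_n$ one needs the margin to be $\leq\delta_n$, which holds because the two near-minimal block probabilities differ by $\rho$, so we take $\rho\leq\delta_n$; for $\EuScript{Q}_n$ one needs the margin to be $\geq\delta_n$, forcing $\rho\geq\delta_n$ (and in fact $\rho\asymp\delta_n$). (ii) Lower bound the excess risk: show that for $\eta\in\{0,1\}$, any estimator $\psi_n$ that outputs $a_{1-\eta}$ on input $b$ incurs, under $\P_\eta$, an excess risk at least of order $\rho\cdot\widetilde\P_\eta(Y_D=b)\gtrsim\rho\,(1/4)^{|D|+|G|}$; hence $\max_\eta R(\psi_n;\P_\eta)$ is bounded below by half of this for any fixed $\psi_n$, since $\psi_n(b)$ can agree with at most one of $a_0,a_1$. (iii) Bound the distinguishability: compute (or upper bound) the total variation or Hellinger/KL distance between the laws of $X_1^n$ under $\P_0$ and $\P_1$. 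Since the per-coordinate perturbation is $O(\rho)$ and there are $O(n)$ relevant coordinates, $\mathrm{KL}(\P_0^{(n)}\Vert\P_1^{(n)})\lesssim n\rho^2$, so the laws are $\Omega(1)$-indistinguishable precisely when $\rho\lesssim 1/\sqrt n$. (iv) Assemble via Le Cam: $\inf_{\psi_n}\sup_\eta R(\psi_n;\P_\eta)\gtrsim \rho\,(1/4)^{|D|+|G|}\cdot(1-\|\P_0^{(n)}-\P_1^{(n)}\|_{TV})$, and in the first regime choose $\rho=c/\sqrt n$ to make the TV distance bounded away from $1$ (the $\e^{-1}$ in the stated bound suggests going through $\mathrm{KL}\leq 1$ and the Bretagnolle--Huber / Pinsker-type bound $\tfrac12\e^{-\mathrm{KL}}$, giving the clean constant $\e^{-1}$); in the second regime take $\rho\asymp\delta_n$, so that $\|\P_0^{(n)}-\P_1^{(n)}\|_{TV}$ is controlled by $n\delta_n^2$ and the residual probability of not distinguishing is $\gtrsim\e^{-n\delta_n^2}$, yielding the factor $\delta_n\e^{-n\delta_n^2}$.

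I expect the main obstacle to be step (i)/(ii) done \emph{simultaneously and with the correct constants}: one must build $\P_0,\P_1$ so that (a) they lie in $\EuScript{P}_n$ (resp.\ $\EuScript{Q}_n$) with the prescribed $\Gamma$ and margin bound, (b) the configuration $b$ has probability bounded below by something like $4^{-|D|}$ and the guess-block conditional probabilities are bounded below by $4^{-|G|}$ so the excess-risk lower bound carries the advertised $(1/4)^{|D|+|G|}$, and (c) the perturbation still respects stationarity of the process on $\A^\Z$ and the ``$\min(G\cup D)=1$'' normalization. Reconciling these --- especially making sure that enlarging the alphabet (needed to keep individual symbol probabilities small and $\Var_0(p)<1$, equivalently $\Gamma>0$) does not destroy the lower bound on $\widetilde\P(Y_D=b)$ --- is the delicate bookkeeping. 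A secondary subtlety is that the excess risk \eqref{eq:risk} involves a supremum over $b$ and an outer product measure $\widetilde\P=\P\otimes\P$; I would handle this by noting that restricting the supremum to the single bad $b$ only decreases the risk, so the two-point argument on that coordinate suffices, and the $Y$-process is an independent copy whose only role is to supply the weights $\widetilde\P(Y_D=b)$, which are the same under $\P_0$ and $\P_1$ up to $O(\rho)$ and hence harmless.
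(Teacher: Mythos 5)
Your proposal is correct and follows essentially the same route as the paper: a Le Cam two-point argument with i.i.d.\ product measures whose probabilities of two distinguished symbols are perturbed by $\rho\asymp 1/\sqrt n$ (resp.\ $\rho\asymp\delta_n$), the per-sample KL controlled by a chi-square bound so that $\KL\lesssim n\rho^2$, the Bretagnolle--Huber inequality $1-d_{{\scriptscriptstyle \mathrm{TV}}}\geq \e^{-\KL}$ supplying the $\e^{-1}$ (resp.\ $\e^{-n\delta_n^2}$) factor, and the $(1/4)^{|D|+|G|}$ arising from the block probabilities at the chosen $b=(a_0,\ldots,a_0)$. The only cosmetic difference is your hedge about a possible extra factor $\tfrac12$ from the version of Bretagnolle--Huber; the paper uses the form without it, exactly as needed for the stated constant.
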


It is interesting to observe that when the left conditional probabilities satisfy \eqref{eq:exponentialUpper}, from Proposition \ref{prop:expupperbound}, we have that the upper bound decreases exponentially in $|D|+|G|$. Given that the minimax lower bound obtained in Theorem \ref{theo:lowerbound} also decreases exponentially in $|D|+|G|$, our bound has the correct order with respect to the size of data and guessing sets.

\section{Examples satisfying Assumption \ref{eq:Gammadef}}\label{sec:examples}

As noted earlier, Assumption \ref{eq:Gammadef} is broadly applicable. To illustrate this, we present several well-studied models of categorical time series along with concrete examples that satisfy the assumption.
 
\paragraph{Independent random variables.}
For independent random variables, the function \( p \) defined in \eqref{eq:conditionaldef} is independent of \( x \in \A^{\Z_-} \). Consequently, \( \text{Var}_n(p) = 0 \) for all \( n \geq 0 \), which implies \( \Gamma(p) = 1 \). Even in this simple case, our result seems to be novel, as it provides a learning rate that does not depend on the alphabet size.  We observe that if condition \eqref{eq:exponentialUpper} is not satisfied, we would have $\sup_{a \in \mathcal{A}}\P(X_1 = a) = 1$, in which case the guessing problem becomes trivial. Hence, for all practical purposes, we only need to consider when condition \eqref{eq:exponentialUpper} is satisfied.

\paragraph{Markov chains.}
Consider a Markov chain with state space $\A$ and transition matrix $Q=(Q(a,b))_{a,b\in\A}$. In this case $p(a|x)=Q(x_{-1},a)$ for any $x\in \A^{\Z_-}$, thus $\text{Var}_n(p)=0$ for any $n\ge1$, and condition \eqref{eq:Gammadef} becomes equivalent to 
\[
\Var_0(p)=d(Q):=\sup_{a,b\,\in\A}\Vert\, Q(a,\cdot)-Q(b,\cdot)\Vert_{{\scriptscriptstyle \mathrm{TV}}}<1,
\]
and $\Gamma(p)$ can be substituted by $1-d(Q)$. The quantity $d(Q)$ is called the Dobrushin ergodicity coefficient (see, for example, \cite{dobrushin1956central} and \cite[Section 18.2]{douc2018markov}). 
More generally, a $k$ step Markov chain will satisfy $\text{Var}_n(p)=0$ for any $n\ge k$, and therefore our results also apply as long as $\Var_0(p)<1$.

\paragraph{Autoregressive models.}
Consider a function $\Upsilon: \R \to (0,1)$ such that $\Upsilon(r) + \Upsilon(-r) = 1$ and an absolutely summable sequence of real numbers
$(\xi_j)_{j \geq 0}$. Consider then the binary process $(X_j)_{j\in\Z}$ with transition probabilities given by
\[
p(a|x) = \Upsilon\bigg(a\sum_{j=1}^\infty \xi_j\, x_{-j} + a\xi_0\bigg).
\]
The process generated by this kernel is called a binary autoregressive process \cite{kedem2005regression}. A simple and yet classical situation is when $\Upsilon(u)=\big(1+\e^{-2u}\big)^{-1}$, $\xi_i \geq \xi_j \geq 0$ for all $j > i \geq 1$,
and $\xi_0 = \sum_{k = 1}^\infty \xi_k$. In this case,
we have
\[
\frac{2\e^{2\xi_1}}{(1+\e^{2\xi_1})^2}  \sum_{k>j}\xi_k \leq \Var_j (g) \leq \sum_{k>j}\xi_k\,,
\]
 indicating that if $\sum_{j>1}\sum_{k>j}\xi_k < \infty$ to get $\Gamma(p)>0$, the autoregressive process satisfies Assumption \ref{eq:Gammadef}.

\paragraph{Poisson regression for count time series.}
Let $\A=\N$ and $(\xi_j)_{j \geq 0}$ be a sequence of non-positive absolutely summable real numbers, and a constant $c > 0$.
For all $x \in \A^{\Z_-}$, let
\[
v(x) = \exp\bigg(\,\sum_{j=1}^\infty \xi_j \min\{x_{-j}, c\}\bigg). 
\]
For all $a\in \N$ and $x \in \A^{\Z_-}$, the kernel of a Poisson regression model (see \cite{kedem2005regression}) is defined as 
\[
p(a|x) = \frac{\e^{-v(x)}v(x)^a}{a !}.
\]
Applying the mean value theorem to $r\mapsto \e^{-\e^{r}}\e^{ra}/a!$, and maximizing on $r \in (-\infty,0]$ for each $a \in \N$, we 
obtain that $\Gamma(p)$ is finite if $\sum_{j>1}\sum_{k>j}\xi_k < \infty$. 

\paragraph{Hidden Markov chains.}
Let $(X_j)_{j\in\Z}$ be a Markov chain with transition matrix $P$ on a finite alphabet $\A$ and a function $f:\A\rightarrow \B$ where $\B$ is another alphabet with $|\B|<|\A|$. Specifically, this means that certain symbols in 
$\A$ are indistinguishable or merged, which can, for instance, be attributed to a measurement error.
The process $(Y_j)_{j\in\Z}$ defined by $Y_i=f(X_i)$ is a hidden Markov process, and it is usually not Markov of any order. The works of \cite[Theorem 3.1]{Chazottes_Ugalde_2011}, \cite{de2016continuity} and \cite{piraino2020single} for instance, give quantitative information on the variation of $(Y_j)_{j\in\Z}$, allowing us to compute $\Gamma(p)$. In particular, when $P > 0$ and $\A$ is finite, the variation of the projected process decays exponentially fast.
If, in turn, certain symbols of $\B$ become indistinguishable or are merged, the variation of the resulting process will continue to decay exponentially \cite{piraino2020single}.

\paragraph{Convex mixture of Markov chains.}
Let $(\lambda_j)_{j \geq 1}$ be a sequence of non-negative real numbers such that $\sum_{j=1}^\infty \lambda_j = 1$. Define a family of finite-order Markov kernels $p^{\mathsmaller{[k]}}:\A \times \A^{\llbracket -k, -1\rrbracket} \to [0,1], k\ge1$. The kernel for a mixture of Markov chains is defined, for all $a 
\in \mathcal \A$ and $x \in \A^{\Z_-}$, as
\[
p(a|x) = \sum_{j = 1}^\infty \lambda_j\, p^{\mathsmaller{[j]}}\big(a\big|x^{-1}_{-j}\big).
\]
We have $\Gamma(p) \geq \prod_{k \geq 1} \big(1-\sum_{j \geq k} \lambda_j\big)$ which is $>0$ whenever $\sum_{j =1}^\infty j \lambda_j < \infty$ and $\lambda_0 > 0$. This is a general 
class of kernels since all kernels $p$ such that  $\lim_{j\to\infty}\Var_j(p) = 0$
can be represented as a {convex} mixture of Markov chains \cite{kalikow,comets_fernandez_ferrari_2002}.

\paragraph{Gibbs measures.}
Here we assume that $|\A|<\infty$.
Gibbs measures, or Gibbs random fields, originally introduced in the context of statistical physics, form a natural and extensively studied class of examples in probability theory. These measures are particularly relevant to our guessing problem because they are defined via two-sided conditional probabilities (as formalized in \eqref{eq:specification}). Their inherent structure and rich theoretical framework make them well-suited for addressing such probabilistic inference challenges. Here we only consider Gibbs measures on the one-dimensional lattice $\Z$.

Formally, consider a collection $\Phi:=(\Phi_\Lambda)_{\Lambda\Subset \Z}$ of real-valued functions on $\A^\mathbb Z$ such that, for any $x$, $\Phi_\Lambda(x)$ depends only 
on $x_\Lambda$.  This is called an \emph{interaction potential}. We  assume translation invariance, that is, $\Phi_{\theta \Lambda}\circ T=\Phi_\Lambda$, and absolutely summability, that is,
\begin{equation}\label{eq:ass_integrability}
\sum_{\substack{ \Lambda\Subset \Z \\ 0\in\Lambda}}\, \|\Phi_\Lambda\|_{\infty}<\infty.
\end{equation}
The \emph{energy} of a configuration $x$ in the ``box'' $\Lambda$ is $H_\Lambda(x):=\sum_{\Lambda'\Subset\Z:\Lambda'\cap\Lambda\ne\emptyset}\Phi_{\Lambda'}(x)$. 

A Gibbs measure with potential $\Phi$ is given by any measure that satisfies
\begin{equation}\label{eq:specification}
\P(X_\Lambda=x_\Lambda\mid X_{\Lambda^{\!\comp}}=y_{\Lambda^{\!\comp}})\stackrel{\P-\mathrm{a.s.}}{=}\frac{\exp{\left(-\beta H^\Phi_\Lambda(x_\Lambda y_{\Lambda^{\!\comp}})\right)}}{Z^\Phi_{\Lambda,\beta}(y)}\,\,,\quad \Lambda\Subset \Z,
\end{equation}
where $Z^\Phi_\Lambda(y)$ is the normalization factor and $\beta\ge0$ is a constant known as ``inverse temperature''. In other words, they are specified by inside \emph{vs} outside (finite boxes $\Lambda$) conditioning instead of the present \emph{vs} past conditioning of \eqref{eq:conditionaldef}. For finite range potentials, the corresponding Gibbs measure is a $k$-steps Markov chain for some $k$ and with $\Var_0(p)<1$ (see \cite[Theorem 3.5]{georgii2011gibbs}). For a Gibbs measure with an unbounded range potential $\Phi$, we prove the following proposition. 
\begin{prop}\label{prop:suf_gibbs}
Let $\P$ be a Gibbs measure with potential $\Phi$. Define $\Delta(\Phi_\Lambda):=\max\Phi_\Lambda-\min\Phi_\Lambda$. If $\Phi$ is such that
\begin{equation}\label{condition-on-Phi}
\sum_{k\ge1}\,\sum_{i\ge k}\,
\sum_{\substack{ \min\Lambda= 0 \\ \max\Lambda \ge  i}}
\Delta(\Phi_\Lambda)<\infty\,,
\end{equation}
then the corresponding left conditional expectation satisfies Assumption \eqref{eq:Gammadef} and condition \eqref{eq:exponentialUpper}.
\end{prop}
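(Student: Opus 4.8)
The two conclusions to reach are $\Gamma(\P)>0$ (Assumption \ref{eq:Gammadef}) and $\bar p<1$ (condition \eqref{eq:exponentialUpper}); both follow once one controls the left kernel $p$ from \eqref{eq:conditionaldef}. In dimension one an absolutely summable translation-invariant potential admits a unique Gibbs measure, so $\P$ and $p$ are well defined, and the convenient way to reach $p$ is to pass from the two-sided interaction $\Phi$ to a one-sided potential. Define $\varphi$ on $\A^{\Z_{\le 0}}$ by anchoring each interaction term at its right endpoint, $\varphi(w):=-\beta\sum_{\Lambda:\,\max\Lambda=0}\Phi_\Lambda(w)$; summing $\varphi$ along the shift recovers the finite-volume energies of \eqref{eq:specification} up to a boundary term that vanishes by \eqref{eq:ass_integrability}, so on the one-sided subshift $\P$ is the $g$-measure/equilibrium state of $\varphi$, and its left kernel is $p(a\mid z)=e^{\varphi(az)}h(az)/(\lambda\,h(z))$, where $(\lambda,h)$ is the leading eigenpair of the Ruelle transfer operator $\mathcal L_\varphi$ and $az$ denotes the sequence equal to $a$ at the origin and to $z$ at the negative sites. (Alternatively, one may directly invoke the representation of one-dimensional Gibbs measures as chains with complete connections, \cite{fernandez/maillard/2005}.)

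Two estimates then finish the proof. First, uniform positivity: by the single-site form of \eqref{eq:specification} and \eqref{eq:ass_integrability}, $\P(X_0=a\mid X_{\{0\}^\comp}=\omega)=e^{-\beta E_a(\omega)}/\sum_{b\in\A}e^{-\beta E_b(\omega)}$ with $E_a$ the corresponding single-site energy, and $|E_a|\le M:=\sum_{\Lambda\ni0}\|\Phi_\Lambda\|_\infty<\infty$; hence this quantity lies in $[c,1-c]$ with $c:=e^{-2\beta M}/|\A|>0$ (using $|\A|\ge2$), and averaging over the future gives $p(a\mid z)\in[c,1-c]$ uniformly in $a,z$. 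Therefore $\bar p\le 1-c<1$, which is \eqref{eq:exponentialUpper}, and $\Var_0(p)\le 1-e^{-2\beta M}<1$. Second, the decay of $\Var_n(p)$: by translation invariance, modifying $w$ only at coordinates $\le-n-1$ changes $\varphi$ by at most $\beta\,b_{n+1}$, where $b_j:=\sum_{\min\Lambda=0,\,\max\Lambda\ge j}\Delta(\Phi_\Lambda)$, so $\Var_n(\varphi)\le\beta\,b_{n+1}$; condition \eqref{condition-on-Phi} forces $\sum_j b_j<\infty$, the Walters summability condition, and the Ruelle--Perron--Frobenius theorem then yields a strictly positive, Walters-regular eigenfunction $h$ with $\Var_n(\log h)\le C\sum_{k\ge n}\Var_k(\varphi)$. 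Substituting into the formula for $p$ and using that $p$ is bounded away from $0$ to pass from $\log p$ back to $p$, we obtain, with a constant $C'$ depending only on $\beta$ and $\Phi$,
\[
\Var_n(p)\le C'\sum_{k\ge n}\ \sum_{\substack{\min\Lambda=0\\ \max\Lambda\ge k}}\Delta(\Phi_\Lambda).
\]

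Summing over $n\ge1$,
\[
\sum_{n\ge1}\Var_n(p)\le C'\sum_{n\ge1}\ \sum_{k\ge n}\ \sum_{\substack{\min\Lambda=0\\ \max\Lambda\ge k}}\Delta(\Phi_\Lambda),
\]
which is exactly $C'$ times the left-hand side of \eqref{condition-on-Phi}, hence finite; together with $\Var_0(p)<1$ this gives $\Gamma(\P)=\prod_{j\ge0}(1-\Var_j(p))>0$, and \eqref{eq:exponentialUpper} was obtained above. The step I expect to be the main obstacle — really the only one requiring genuine work — is the reduction from the two-sided potential to the one-sided picture: one must check that $\varphi$ generates the same measure (controlling the boundary corrections in the telescoping) and, above all, establish the spectral gap of $\mathcal L_\varphi$ together with the quantitative eigenfunction bound $\Var_n(\log h)\le C\sum_{k\ge n}\Var_k(\varphi)$. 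This is classical thermodynamic-formalism machinery, but it is precisely there that passing from the variation of the potential to that of the kernel costs an ``extra summation'', which is why the triple sum in \eqref{condition-on-Phi} — morally $\sum_k k\,b_k<\infty$ — is the right hypothesis rather than plain summability of $\Phi$. An elementary route bypassing transfer operators — write $p(a\mid z)=\E[\P(X_0=a\mid X_{\{0\}^\comp})\mid X^{-1}_{-\infty}=z]$ by the tower property and compare two pasts via a coupling of the conditional laws of the future — runs instead into a self-referential bound for $\Var_n(p)$ (the dependence of the future's law on the past being itself measured by the $\Var_\ell(p)$) that closes only after one has first established qualitative loss of memory; the transfer-operator route sidesteps this circularity.
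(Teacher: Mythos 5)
Your argument is correct and follows essentially the same route as the paper's own proof (via Theorem \ref{prop:gibbsvsg}): reduce to a one-sided potential whose $n$-th variation is controlled by $\sum_{\min\Lambda=0,\,\max\Lambda\ge n}\Delta(\Phi_\Lambda)$, invoke the Walters--Ledrappier/RPF normalization to bound $\Var_n(p_\Phi)$ by the tail sums $\sum_{k\ge n}\sum_{\min\Lambda=0,\,\max\Lambda\ge k}\Delta(\Phi_\Lambda)$ (up to a constant absorbing $|\A|$, finite in this setting), sum over $n$ against \eqref{condition-on-Phi}, and deduce \eqref{eq:exponentialUpper} together with $\Var_0(p_\Phi)<1$ from uniform non-nullness of the single-site conditional probabilities --- the only real difference being that you define the one-sided potential directly by anchoring each interaction at its right endpoint, where the paper passes through Coelho--Quas and Pollicott and so loses a harmless factor $2$ in the index. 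One caveat: your parenthetical claim that absolute summability alone gives uniqueness of the one-dimensional Gibbs measure is false in general (Dyson-type long-range models), but it is not load-bearing, since under \eqref{condition-on-Phi} the summable-variation theory you invoke identifies the given stationary Gibbs measure with the unique equilibrium state anyway.
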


As an example, consider the 1D long-range Ising model, a classical example of a non-Markovian Gibbs measure. Let $\A=\{-1,+1\}$. Fix $\alpha>1$ and consider the potential $\Phi_\Lambda(x)=|i-j|^{-\alpha}x_ix_j$ if $\Lambda=\{i,j\}$ with $i\ne j$, and $\Phi_\Lambda(x)=0$ otherwise. For $\alpha>2$, the process $(X_j)_{j\in \Z}$ is uniquely specified by \eqref{eq:specification} (see for instance \cite[(8.41)]{georgii2011gibbs}). We have that $\sum_{{\min\Lambda= 0, \max\Lambda \ge i}}\Delta(\Phi_\Lambda)=\sum_{\ell\ge i}\Delta(\Phi_{\{0,\ell\}})$ which is $\mathcal{O}(i^{-\alpha+1})$. Thus, as a consequence of Proposition \ref{prop:suf_gibbs}, Assumption \eqref{eq:Gammadef} is satisfied when $\alpha>3$. 

In the framework of Gibbs measures, there exists a natural extension of Markov chains where conditional independence is a two-sided property. This leads to the exploration of concepts analogous to those found in hidden Markov chains. In \cite{Redig2010} it was shown that the resulting process is a Gibbs measure with an interaction potential that decays exponentially. Starting from an exponentially decaying interaction potential, the transformed potential retains this property. 
In the specific case of the one-dimensional long-range Ising model with \(\alpha > 2\), the transformation yields a potential of the same type but with \(\alpha' = \alpha - 1\). Consequently, to ensure that the transformed process satisfies Assumption \eqref{eq:Gammadef}, it is necessary to begin with \(\alpha > 4\).

\section{Proofs}\label{sec:proofs}

\subsection{Proof of Proposition \ref{prop:expupperbound}}

Let $K = |G|+|D|$ and $G \cup D = \{i_1, \ldots i_{\scaleto{K}{4pt}}\}$ with $i_m < i_{\ell}$ if $m < \ell$. For all $d\in A^{G \cup D}$, we have
\begin{align*}
&\P\big(Y_G =d_G, Y_D = d_D \big)-\inf_{c\, \in A^G}\P\big(Y_G =c, Y_D = d_D \big)\\
&\quad \leq \P\big(Y_G =d_G, Y_D = d_D \big)\\
& \quad = \P\big(Y_{i_1} = d_{i_1})\,\prod_{j =2}^K \P\big(Y_{i_j} = d_{i_j}\,|\,Y_{i_m} = d_{i_m}, 1\leq m \leq j-1\big)\\
&\quad \leq \bar{p}^{\,K}.
\end{align*}
Because the last inequality does not depend on $d$, we conclude that
$\beta_{\smallDG} \leq \bar{p}^{|D|+|G|}$ as claimed.

\subsection{Proof of Theorem \ref{theo:probguess}}

From the definition of the risk, we have
\begin{equation*}
R(\hat{f}^n_{\smallDG})  \leq \beta_{\smallDG}\,.
\end{equation*}
Hence, we will consider the case where
$\epsilon < \beta_{\smallDG}$. 
For $\epsilon>0$ and $b\in \A^D$, let
\begin{align*}
& \scaleto{\gamma}{8pt}_{\smallDG}(b, \epsilon) = \\
&\quad \Big\{c \in \A^G : \P(X_G \neq c, X_{D} = b) - \inf_{a\, \in \A^G}\P(X_G \neq a, X_{D} = b)  \leq \epsilon\Big\}.
\end{align*}
We have
\begin{align*}
&\widetilde{\P}\big(\,\hat{f}^n_{\smallDG}(Y_D) \neq  Y_G, Y_D = b\big) -  \inf_{a\, \in A^G}\widetilde{\P}\big(a \neq  Y_G, Y_D = b\big)\\
& = \sup_{a\, \in A^G}\widetilde{\P}\big(Y_G = a, Y_D = b\big) - \widetilde{\P}\big(\hat{f}^n_{\smallDG}(Y_D) =  Y_G, Y_D = b\big) \\
& = \sup_{a\, \in A^G}\widetilde{\P}\big(Y_G = a, Y_D = b\big)\, - \sum_{c\, \in A^G}\widetilde{\P}\big(\hat{f}^n_{\smallDG}(b) =  c, Y_D = b, Y_G = c\big) \\
& = \sum_{c\, \in A^G}\bigg(\,\sup_{a\, \in A^G}\widetilde{\P}\big(Y_G = a, Y_D = b\big) - \widetilde{\P}\big(Y_G = c, Y_D = b\big)\bigg)\,\widetilde{\P}\big(\hat{f}^n_{\smallDG}(b) =  c\,\big) \\
& = \sum_{c\, \in \gamma_{D,G}(b, \epsilon/2)}\bigg(\,\sup_{a\, \in A^G}\widetilde{\P}\big(Y_G = a, Y_D = b\big) - \widetilde{\P}\big(Y_G = c, Y_D = b\big)\bigg)\,\widetilde{\P}\big(\hat{f}^n_{\smallDG}(b) =  c\big) \\
&+\sum_{c\, \notin \gamma_{D,G}(b, \epsilon/2)}\bigg(\sup_{a \in A^G}\widetilde{\P}\big(Y_G = a, Y_D = b\big) - \widetilde{\P}\big(Y_G = c, Y_D = b\big)\bigg)\,\widetilde{\P}\big(\hat{f}^n_{\smallDG}(b) =  c\,\big)\,.
\end{align*}
From the definition of $\scaleto{\gamma}{8pt}_{\smallDG}$, for all $b \in A^D$, we have 
\begin{align*}
& \sum_{c\, \in \gamma_{D,G}(b, \epsilon/2)}\bigg(\,\sup_{a\, \in A^G}\widetilde{\P}\big(Y_G = a, Y_D = b\big) - \widetilde{\P}\big(Y_G = c, Y_D = b\big)\bigg)\,\widetilde{\P}\big(\hat{f}^n_{\smallDG}(b) =  c\big)\\
&\quad \leq \frac{\epsilon}{2}\,\1\{\epsilon \geq 2\delta_{\smallDG}\}\,,
\end{align*}
where in the last inequality, we used the fact that if $\epsilon/2 < \delta_{\smallDG}$ then $\gamma_{\smallDG}(b,\epsilon/2) = \argmin \P(X_G \neq a, X_D = b)$ and $\P\big(\hat{f}^n_{\smallDG}(b) \in  \gamma_{\smallDG}(b, \frac{\epsilon}{2})\big) \leq 1$.

For $\epsilon>0$, let
\[
\beta_{\smallDG}(\epsilon) = \sup_{a\, \in A^G}\widetilde{\P}\big(Y_G = a, Y_D = b\big) - \inf_{c \in A^G}\widetilde{\P}\big(Y_G = c, Y_D = b\big)\,. 
\]
We also have 
\begin{align*}
& \sum_{c\, \notin \gamma_{\smallDG}(b, \epsilon/2)}\bigg(\,\sup_{a\, \in A^G}\widetilde{\P}\big(Y_G = a, Y_D = b\big) - \widetilde{\P}\big(Y_G = c, Y_D = b\big)\bigg)\widetilde{\P}\big(\hat{f}^n_{\smallDG}(b) =  c\big)\\
& \quad \leq \widetilde{\P}\big(\hat{f}^n_{\smallDG}(b) \notin \scaleto{\gamma}{8pt}_{\smallDG}\big(b, 
\scaleto{\frac{\epsilon}{2}}{12pt}\big)\big)\,\beta_{\smallDG}(\epsilon)\,.
\end{align*}

For all $b \in A^D$, we can rewrite
$\scaleto{\gamma}{8pt}_{\smallDG}(b, \epsilon)$ as
\begin{equation*}
\scaleto{\gamma}{8pt}_{\smallDG}(b, \epsilon) = \bigg\{c \in \A^G : \sup_{a\, \in \A^G}\P(X_G = a, X_{D} = b) -\P(X_G = c, X_{D} = b) \leq \epsilon\bigg\}.
\end{equation*}
Hence, if 
\begin{equation}\label{eq:mainbound}
\sup_{a\, \in \A^G\!,\, b\, \in \A^{D}} \Bigg|\, \frac{N^n_{\smallDG}(b,a)}{n} - \P\big(X_G = a, X_{D} = b\big)\,\Bigg|  \leq \frac{\epsilon}{4} \vee \frac{\delta_{\smallDG}}{2}\,,
\end{equation}
then, for all $b \in A^D$,
\begin{equation*}
\hat{f}^n_{\smallDG}(b) \in \scaleto{\gamma}{8pt}_{\smallDG}\big(b, \scaleto{\frac{\epsilon}{2}}{12pt}\big)\,.
\end{equation*}
It remains to bound the probability from above that \eqref{eq:mainbound} is not satisfied. For this, we use  Theorem \ref{theo:DKW} which gives us the following  Dvoretzky-Kiefer-Wolfowitz type inequality:
for any $u>0$ and $n\ge L=L_{\smallDG}=\mathrm{diam}(G\cup D)$,
\begin{align}\label{eq:CGT2022}
&\P\left(\,\sup_{a\, \in \A^G, \, b\, \in \A^{D}}\bigg|\frac{N^n_{\smallDG}(b,a)}{n} - \P\big(X_G = a, X_{D} = b\big)\bigg|  > u + \sqrt{\frac{2M(1-\Gamma)+\Gamma}{\Gamma(n-L+2)}} \,\right) \notag \\
&\leq \exp \Big(-2(n-L+{2})\,\Gamma^2M^{-2}u^2\Big),
\end{align}
where we used $M:=|D|+|G|$, and wrote
$\Gamma$ for $\Gamma(p)$ to alleviate notation.
 Equating the upper bound of \eqref{eq:CGT2022} to $\epsilon/(2\beta_{\smallDG}(\epsilon))$ 
 (when $\epsilon < 2\delta_{\smallDG}$ we could equate to $\epsilon/\beta_{\smallDG}(\epsilon)$, but the gain is insubstantial), we obtain
\begin{equation}\label{value-of-u}
u = \frac{M}{\Gamma}\sqrt{\frac{\log\left( \frac{2\beta_{\smallDG}(\epsilon)}{\epsilon}\right)}{2(n-L+{2})}}\;.
\end{equation}

If $n$ satisfies \eqref{eq:samplesize}, we obtain
\[
\frac{(\epsilon/2 \vee \delta_{\smallDG})^2}{4}\geq 2\Bigg(u^2+\frac{2M(1-\Gamma)+\Gamma}{\Gamma( n-L+2)}\Bigg).
\]
Now, using the elementary inequality $(\sqrt{u}+\sqrt{v}\,)^2\leq 2(u+v)$ for $u,v\ge0$, we conclude that
\[
\frac{\epsilon}{4} \vee \frac{\delta_{\smallDG}}{2}\geq u+\sqrt{\frac{2M(1-\Gamma)+\Gamma}{\Gamma( n-L+2)}}\,,
\]
which finishes the proof.


\subsection{Proof of Corollary \ref{coro:rate}}

Take $\epsilon_n = \frac{1}{2}\sqrt{\frac{n}{\log n}}$. If \eqref{eq:subcritical} holds, there exists $m_0 \geq 2$ such that for all $n \geq m_0$, $2\delta_n < \epsilon_n$ and thus, in \eqref{eq:samplesize}, $\frac{\epsilon_n}{2} \vee \delta_{\smallDG}=\frac{\epsilon_n}{2}$. Moreover,  there is $m_1 \geq 2$ such that for all $n \geq m_1$ the right-hand side of  \eqref{eq:samplesize} is smaller than $n$ with $\epsilon=\epsilon_n$. Therefore, taking $n_0 = m_0 \vee m_1$, and using Theorem \ref{theo:probguess}, we conclude the proof of the first statement of the corollary.

Similarly, take $\epsilon_n=\frac{1}{2}\sqrt{\frac{n}{\log n}}$. If \eqref{eq:supercritical}  holds, there exists $n_0 \geq 2$ such that for all $n \geq n_0$, $2\delta_n \geq \epsilon_n$ and \eqref{eq:samplesize} is satisfied. So applying Theorem \ref{theo:probguess} concludes the proof of the second statement of the corollary. 

\subsection{Proof of Theorem \ref{theo:lowerbound}}

Our proof follows a classical procedure to obtain minimax lower bounds for estimators. Let us first write the minimax problem. 
Let $n\geq 1$. Writing $\widetilde{\P} = \P \otimes \P$, we have, by definition, for any set of probability measures $\EuScript{M}$
\begin{align*} 
&\inf_{\psi_n \in \Psi_n}\sup_{\P \in \EuScript{M}}R(\psi_n;\P) \\
&= \inf_{\psi_n \in \Psi_n}\sup_{\P \in \EuScript{M}}\; \sup_{b\, \in \A^D}\Big(\widetilde{\P}\big(\psi_n(X_1^n, b) \neq  Y_G, Y_D = b\big) - \inf_{a\, \in \A^G}\widetilde{\P}\big(a \neq  Y_G, Y_D = b\big)\Big). \notag
\end{align*} 
Now, we want to obtain a lower bound for the left-hand side of the above equation. Fix an element $b \in \A^D$. We will later choose a specific $b$. Let $a_{\sP}= \argmin_{a \in \A^G}\widetilde{\P}(a \neq  Y_G, Y_D = b)$. A minimizer exists because the probability sums to one. If there is more than one minimizer, we can just choose one of them. 
To simplify the notation, let us abbreviate \(\psi_n(X_1^n, b)\) as \(\psi_n(b)\)
in the next estimations. We have
\begin{align*} 
&\widetilde{\P}\big(\psi_n(b) \neq  Y_G, Y_D = b\big) - \widetilde{\P}\big(a_{\sP} \neq  Y_G, Y_D = b\big)\\
&= \sum_{c\, \neq \, a_{\P}} \widetilde{\P}\big(\psi_n \neq  c,  Y_G = c, Y_D = b\big) - \sum_{c\, \neq\, a_{\P}} \widetilde{\P}\big(Y_G = c, Y_D = b\big)\\ 
&\hspace{1cm}+ \widetilde{\P}\big(\psi_n(b) \neq  a_{\sP}, Y_G = a_{\sP}, Y_D = b\big)\\
&= \widetilde{\P}\big(\psi_n(b) \neq  a_{\sP}, Y_G = a_{\sP}, Y_D = b\big) -
\sum_{c\, \neq\, a_\P} \widetilde{\P}\big(\psi_n(b) =  c,  Y_G = c, Y_D = b\big)\\
&=  \sum_{c\, \neq\, a_\P} \widetilde{\P}\big(\psi_n(b) =  c, Y_G = a_{\sP}, Y_D = b\big) - \sum_{c\, \neq\, a_{\sP}} \widetilde{\P}\big(\psi_n(b) =  c,  Y_G = c, Y_D = b\big)\\
&=  \sum_{c\, \neq \, a_\P} {\P}\big(\psi_n(b) =  c\big)\Big(\P\big(Y_G = a_{\sP}, Y_D = b\big) - {\P}\big(Y_G = c, Y_D = b\big)\Big).
\end{align*} 
Let define the margin $\delta_{\sP}(b) : = \inf\{\P(Y_G = a_{\sP}, Y_D = b) - {\P}(Y_G = c, Y_D = b) > 0: c \neq a_{\sP}\}$. If the set inside the infimum is empty, we take $\delta_\P(b) = 0$. Let $\P_0$ and $\P_1$ be any two probability distributions in $\EuScript{M}$. 
We have, for all $b \in A^D$, 
\begin{align} 
& \inf_{\psi_n \in\, \Psi\!_n}\sup_{\P \in \EuScript{M}}R(\psi_n) \\
& \geq \inf_{\psi_n \in\, \Psi\!_n}\sup_{\P \in \EuScript{M}} \delta_{\sP}(b)\,  {\P}\big(\psi_n(X_1^n, b) \neq  a_{\sP}\big) \notag\\
& \geq \inf_{\psi_n \in\, \Psi\!_n}\sup_{i\, \in \{0,1\}} \delta_{\sP_i}(b)  {\P}_i\big(\psi_n(X_1^n, b) \neq  a_{\sP_i}\big) \notag\\
& \geq \left(\inf_{i\, \in \{0,1\}}\delta_{\sP_i}(b)\right) \inf_{\psi_n \in\, \Psi\!_n}
\,\sup_{i\, \in \{0,1\}}  {\P}_i\big(\psi_n(X_1^n, b) \neq  a_{\sP_i}\big)\notag \\
& \geq \left(\inf_{i\, \in \{0,1\}}\delta_{\sP_i}(b)\right) \inf_{\psi_n \in \,\Psi\!_n}
\frac{{\P}_0\big(\psi_n(X_1^n, b) \neq  a_{\sP_0}\big)+{\P}_1\big(\psi_n(X_1^n, b) \neq  a_{\sP_1}\big)}{2}. \label{eq:reduction}
\end{align} 
For $x \in \A^n$, let $\phi_*(x,b) = a_{\sP_0}$ if $\P_0(X_1^n = x) \geq \P_1(X_1^n = x)$ and  $\phi_*(x,b) = a_{\sP_1}$ otherwise.
It is straightforward to verify that the function $\phi_*$  minimizes the righthand side of the inequality \eqref{eq:reduction}. Let $\P_0^{(n)}$ and $\P_1^{(n)}$ be the projection of $\P_0$ and $\P_1$ on $\{1, \ldots, n\}$. From the definition of total variation distance $d_{{\scriptscriptstyle \mathrm{TV}}}$, we have that
\begin{equation*}
\frac{{\P}_0(\phi_*(X_1^n, b) \neq  a_{\sP_0})+{\P}_1(\phi_*(X_1^n, b) \neq  a_{\sP_1})}{2} = 1-d_{{\scriptscriptstyle \mathrm{TV}}}\big(\P_0^{(n)}, \P_1^{(n)}\big).
\end{equation*}

To get a lower bound of the right-hand side, we use Bretagnolle-Huber inequality \cite[Lemma 2.6, p. 89] {tsybakov2009nonparametric}, that is,
\begin{equation*} 
1-d_{{\scriptscriptstyle \mathrm{TV}}}\big(\P_0^{(n)}, \P_1^{(n)}\big) \geq \e^{-\KL\big(\P_0^{(n)} \Vert\P_1^{(n)}\big)},
\end{equation*} 
where $\KL\big(\P_0^{(n)} \Vert\P_1^{(n)}\big)$ is the Kullback-Leibler divergence, that is,
\begin{align*} 
& \KL\big(\P_0^{(n)} \Vert \P_1^{(n)}\big): = \sum_{x^n_1 \in \A^n} \P_0^{(n)}(X_1^n = x^n_1)\log \frac{\P_0^{(n)}(X_1^n = x^n_1)}{\P_1^{(n)}(X_1^n = x^n_1)}\\
&= \sum_{x^n_1 \in \A^n} \P_0^{(n)}(X_1^n = x^n_1) \sum_{k=0}^{n-1}\log\frac{\P_0^{(n)}\big(X_{n-k}=x_{n-k}\,|\,X_1^{n-k-1} = x_1^{n-k-1}\big)}{\P_1^{(n)}\big(X_{n-k}=x_{n-k}\,|\,X_1^{n-k-1} = x_1^{n-k-1}\big)}\\
&=\sum_{k= 0}^{n-1}\E_{\P_0^{(n)}}\left[\sum_{a \in \A} \P_0^{(n)}\big(X_{n-k} = a 
\,\big|\,X_1^{n-k-1}\big)
\log\frac{\P_0^{(n)}\big(X_{n-k}=a\,|\,X_1^{n-k-1}\big)}{\P_1^{(n)}\big(X_{n-k}=a\,\big|\,X_1^{n-k-1}\big)}\right].
\end{align*} 
Using Lemma \ref{lemma:KLchi} (see Appendix \ref{app:KL}), we obtain 
\begin{align*} 
\KL\big(\P_0^{(n)} \Vert \P_1^{(n)}\big)\leq \sum_{k=0}^{n-1}\; \sup_{x,y,z \in \A^{\Z_-}}
\mathlarger{\sum}_{a \in \mathcal{A}} \frac{\Big(p_{\sP_0}\big(a\,|\,x^{-1}_{-k}y\big)-p_{\sP_1}\big(a\,|\,x^{-1}_{-k}z\big)\Big)^2}{p_{\sP_1}(a\,|\,x^{-1}_{-k}z)}.
\end{align*} 

Putting everything together, we obtain
\begin{align*}
& \inf_{\psi_n \in \Psi_n}\sup_{\P \in \EuScript{M}}R(\psi_n;\P)\\
& \geq  \left(\inf_{i \in \{0,1\}}\delta_{\sP_i}(b)\right) \exp\left(-\sum_{k=0}^{n-1} \;\sup_{x,y,z \in \A^{\Z_-}}
 \mathlarger{\sum}_{a \in\mathcal{A}}\frac{(p_{\sP_0}\big(a\,|\,x^{-1}_{-k}y\big)-p_{\sP_1}\big(a\,|\,x^{-1}_{-k}z\big)\big)^2}{p_{\sP_1}\big(a\,|\,x^{-1}_{-k}z\big)}\right).
\end{align*}

Let index the elements of the alphabet as $\mathcal{A} = \{a_0, a_1, a_2, \ldots, a_{|\A|}\}$ when the alphabet is finite, otherwise write $\mathcal{A} = \{a_0, a_1, a_2, \ldots \}$. Choose two distinct elements $a_0,a_1\in\A$. 

We will first consider the case when $\EuScript{M} = \EuScript{P}_n$. Let $\P_0, \P_1 \in \EuScript{P}_n$ be product measures with $\P_0(X_1 = a_{0}) = 4^{-1} + 1/(8\sqrt{n}\,)+2^{-|\mathcal{A}|}$, $\P_0(X_1 = a_{1}) = 4^{-1} - 1/(8\sqrt{n}\,) +2^{-|\mathcal{A}|}$ and, for $i \geq 2$, $\P_0(X_1 = a_i) = 2^{-i}$. Similarly $\P_1(X_1 = a_{1}) = 4^{-1} + 1/(8\sqrt{n}\,) +2^{-|\mathcal{A}|}$, $\P_1(X_1 = a_{0}) = 4^{-1} - 1/(8\sqrt{n}\,) +2^{-|\mathcal{A}|}$ and for $i \geq 2 $, $\P_1(X_1 = a_i) = 2^{-i}$. When $|\A| = \infty$, we have $2^{-|\A|} = 0$. 

With these choices, the supremum in the argument of the exponential simplifies to
\begin{align*}
\sum_{a\, \in \mathcal{A}} \frac{\big(\,p_{\sP_0}(a)-p_{\sP_1}(a)\big)^2}{p_{\sP_1}(a)} \leq \frac{1}{n}\,.
\end{align*}
Also
\begin{align*}
&\delta_{\sP_0}(b)=\delta_{\sP_1}(b)=\\
& \inf_{\substack{c\,\in \A \\ c\neq a_0}}\big[\P\!_0\big(Y_G = (a_0, a_0, \ldots, a_0)\big) - {\P}\!_0\big(Y_G = (c, a_0\ldots,a_0)\big)\big]\,\P_1( Y_D = b).
\end{align*}
As we are free to choose any $b$, we take $b = (a_0, \ldots, a_0) \in A^D$. We obtain
\[
\delta_{\sP_0}\ge \frac{1}{4\sqrt{n}} \left(\frac{1}{4}+\frac{1}{8\sqrt{n}}\right)^{|D|+|G|-1}\geq \frac{1}{\sqrt{n}}\left(\frac{1}{4}\right)^{|D|+|G|},
\]
and we conclude that 
\begin{equation*} 
\inf_{\psi_n \in \Psi_n}\sup_{\P \in \EuScript{P}\!_n}R(\psi_n;\P) \geq \frac{1}{\sqrt{n}}\left(\frac{1}{4}\right)^{|D|+|G|}\e^{-1}\,.
\end{equation*} 

 Now, we consider the case where $\EuScript{M} = \EuScript{Q}_n$.
 Let $\P_0, \P_1 \in \EuScript{Q}_n$ be the product measures with $\P_0(X_1 = a_{0}) = 4^{-1} + \delta_n/8+2^{-|\mathcal{A}|}$, $\P_0(X_1 = a_{1}) = 4^{-1} - \delta_n/8 +2^{-|\mathcal{A}|}$ and, for $i \geq 2$, $\P_0(X_1 = a_i) = 2^{-i}$. Similarly $\P_1(X_1 = a_{1}) = 4^{-1} + \delta_n/8 +2^{-|\mathcal{A}|}$, $\P_1(X_1 = a_{0}) = 4^{-1} - \delta_n/8 +2^{-|\mathcal{A}|}$ and for $i \geq 2 $, $\P_1(X_1 = a_i) = 2^{-i}$. 

With these choices, the supremum in the argument of the exponential can be bounded from above by
\begin{align*}
\sum_{a\, \in \mathcal{A}} \frac{\big(\,p_{\sP_0}(a)-p_{\sP_1}(a)\big)^2}{p_{\sP_1}(a)} \leq \delta_n^2\,.
\end{align*}
Because
\begin{align*}
&\delta_{\sP_0}=\delta_{\sP_1}=\\
& \inf_{\substack{c\,\in \A \\ c\neq a_0}}\big[\P_0\big(Y_G = (a_0, a_0, \ldots, a_0)\big) - {\P}_0\big(Y_G = (c, a_0\ldots,a_0)\big)\big]\,\P_1( Y_D = b),
\end{align*}
choosing $b = (a_0, \ldots, a_0) \in A^D$, we obtain
\[
\delta_{\sP_0}\ge \frac{\delta_n}{4} \left(\frac{1}{4}+\frac{\delta_n}{8}\right)^{|D|+|G|-1}\geq \delta_n\left(\frac{1}{4}\right)^{|D|+|G|}.
\]
Therefore,
\begin{equation*} 
\inf_{\psi_n \in \Psi_n}\sup_{\P \in \EuScript{Q}_n}R(\psi_n;\P) \geq \delta_n\left(\frac{1}{4}\right)^{|D|+|G|}\e^{-n\delta_n^2}\,.
\end{equation*} 

\appendix

\section{Dvoretzky-Kiefer-Wolfowitz type inequality}

The following theorem is a restatement of the results of \cite[Theorem 4.1]{chazottes2023gaussian} to better fit the needs of this paper. The proof is included for the convenience of the reader. This result is a variant of what is commonly referred to in the literature as the Dvoretzky-Kiefer-Wolfowitz inequality, specifically for the case of independent random variables.

Let $S \Subset \Z$ and $\sigma\in\A^S$ with $\text{diam}(S):=\sup S - \inf S = k \,(< \infty)$.
For any $n \geq k$, let us denote the frequency of occurrences of the string
$\sigma$ in $X_1^n$ by
\[
\hat r_S^n(\sigma):=\frac{N^n_{S}(\sigma)}{n-k+2}: = 
\frac{\sum_{i = 0}^{n-k+1} \1 \{X_{S+i}=\sigma\}}{n-k+2}.
\]

\begin{thm} \label{theo:DKW}
Let $(X_j)_{j \in \Z}$ be a stationary process such that \eqref{eq:Gammadef} holds. Then, for all $u > 0$, for all $n > 0$, and $0< k\leq n$, we have
\begin{align*}
&\mu\left(\;\sup_{a\,\in\A^S}\big|\,\hat r_S^n(a)-\P(X_S=a)\big| > u+ \sqrt{\frac{2|S|(1-\Gamma (p))+\Gamma (p)}{\Gamma (p)(n-k+2)}}\,\right)\\ &\leq \exp\left(-\frac{2(n-k+2)\, u^2}{|S|^{2}\, \Gamma (p)^{-2}} \right).
\end{align*}
\end{thm}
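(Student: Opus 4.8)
The plan is to use the classical ``mean plus concentration'' decomposition. We bound the mean $\E\big[\sup_{a\in\A^S}|\hat r_S^n(a)-\P(X_S=a)|\big]$, which accounts for the additive square-root term, and then control the fluctuation of $\sup_a|\hat r_S^n(a)-\P(X_S=a)|$ around its mean by the bounded-differences (McDiarmid-type) concentration inequality for chains satisfying Assumption~\ref{eq:Gammadef}; this inequality is the substance of \cite{chazottes2023gaussian}, and I would first recall it in the form: for $F$ depending on finitely many coordinates with Hamming influences $(\delta_i)_i$, $\mu(F-\E F>u)\le\exp\!\big(-2\,\Gamma(p)^2u^2/\sum_i\delta_i^2\big)$.

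First, stationarity of $\P$ gives $\E[\hat r_S^n(a)]=\P(X_S=a)$ for every $a$, so there is no bias and it suffices to bound $\E\big[\sup_a|\hat r_S^n(a)-\E\hat r_S^n(a)|\big]$ and the fluctuations around it. For the mean, writing $Z_a:=\hat r_S^n(a)-\P(X_S=a)$ and using $\sup_aZ_a^2\le\sum_aZ_a^2$ together with Jensen gives
\[
\E\Big[\sup_a|\hat r_S^n(a)-\P(X_S=a)|\Big]\le\Big(\sum_{a\in\A^S}\Var\big(\hat r_S^n(a)\big)\Big)^{1/2}.
\]
The remaining work is to bound $\sum_a\Var(\hat r_S^n(a))$ in an alphabet-free way: expand each $\hat r_S^n(a)$ as a normalized sum of block indicators $\1\{X_{S+i}=a\}$; the diagonal terms sum to at most $n-k+2$ because $\sum_a\P(X_S=a)=1$; the pairs of \emph{disjoint} blocks separated by a gap $m$ contribute, after summing over $a$, a covariance controlled by the variation of $p$, hence a total that is finite under Assumption~\ref{eq:Gammadef}; and the pairs of \emph{overlapping} blocks (at most $2|S|$ partners for each of the $n-k+2$ blocks) contribute, after summing over $a$, a covariance of order $1-\Gamma(p)$ — in the i.i.d.\ case this sum is exactly zero, and in general its size is measured precisely by $p$'s variation. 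Collecting constants yields $\sum_a\Var(\hat r_S^n(a))\le\big(2|S|(1-\Gamma(p))+\Gamma(p)\big)/\big(\Gamma(p)(n-k+2)\big)$, i.e.\ the square of the additive term in the statement.

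For the concentration step, set $F(x_1,\dots,x_n):=\sup_a|\hat r_S^n(a)-\P(X_S=a)|$. Changing the coordinate $x_i$ moves $N_S^n(a)$ by at most $\nu_i$ for every $a$, where $\nu_i$ is the number of counted blocks $S+i'$ containing position $i$; hence $F$ has Hamming influence $\delta_i=\nu_i/(n-k+2)$ at coordinate $i$. The double-counting identity $\sum_i\nu_i=(n-k+2)|S|$ together with $\nu_i\le|S|$ gives $\sum_i\delta_i^2\le|S|^2/(n-k+2)$, so the recalled inequality yields $\mu(F-\E F>u)\le\exp\!\big(-2(n-k+2)\,\Gamma(p)^2u^2/|S|^2\big)$. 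Since $\E F$ is at most the square-root term above, the event $\{F>u+\sqrt{(2|S|(1-\Gamma(p))+\Gamma(p))/(\Gamma(p)(n-k+2))}\,\}$ is contained in $\{F-\E F>u\}$, and the theorem follows.

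The step I expect to be the real obstacle is the estimate of $\sum_a\Var(\hat r_S^n(a))$, specifically the covariance bookkeeping: the diagonal is immediate, but showing that the covariances of overlapping, and of close-but-disjoint, block indicators sum over the (possibly infinite) alphabet to a quantity governed by $1-\Gamma(p)$ and by the tail $\sum_m\Var_m(p)$, rather than by $|\A|$, is exactly where the structure of chains with summable variation enters, and is the point where a naive bound would reintroduce the alphabet size. The concentration half is, by contrast, a direct application of the inequality from \cite{chazottes2023gaussian} once the influences $\delta_i$ have been computed.
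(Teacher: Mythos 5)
Your overall strategy is the same as the paper's: concentrate $f_S:=\sup_{a\in\A^S}|\hat r_S^n(a)-\P(X_S=a)|$ around its mean via the Gaussian concentration bound of \cite{chazottes2023gaussian} with constant $\Gamma(p)^2$, and bound $\E[f_S]$ through Jensen by the square root of $\sum_a\Var(\hat r_S^n(a))$. The concentration half is sound, and your double-counting computation $\sum_i\delta_i^2\le|S|^2/(n-k+2)$ is exactly the influence budget behind the exponent $-2(n-k+2)\Gamma(p)^2u^2/|S|^2$ that the paper obtains by invoking \cite[Theorem 3.2]{chazottes2023gaussian}; the Jensen reduction is also the paper's first step.

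The genuine gap is the step you yourself flag: the estimate $\sum_a\Var(\hat r_S^n(a))\le\big(2|S|(1-\Gamma(p))+\Gamma(p)\big)/\big(\Gamma(p)(n-k+2)\big)$ is asserted (``collecting constants yields'') rather than derived, and the bookkeeping you sketch --- overlapping blocks contributing ``$2|S|$ partners of order $1-\Gamma(p)$'' and disjoint blocks a tail $\sum_m\Var_m(p)$ --- is not how the constant arises and is not obviously closable as stated; in particular, it is not clear why a covariance of two overlapping block indicators, once summed over an infinite alphabet, should be of order $1-\Gamma(p)$. The paper's mechanism, which is the missing ingredient, treats all lags $m=j-i\ge1$ uniformly (no overlap/disjoint split): write $\P(X_{S+i}=\sigma,X_{S+j}=\sigma)=\P(X_S=\sigma)\,\P(X_{S+j}=\sigma\mid X_{S+i}=\sigma)$, bound the deviation of the conditional factor from $\P(X_S=\sigma)$, uniformly in $\sigma$, by the coupling error $q_m(S):=\sup_{x,y\in\A^{\Z_-}}\sum_{\ell\in S+m}\P^{x,y}(\eta_\ell\ne\omega_\ell)$ of the one-step maximal coupling, and then sum these errors via Proposition 5.2 of \cite{chazottes2023gaussian}: $\sum_{m\ge1}q_m(S)\le|S|\sum_{i\ge1}\sup_{x,y}\P^{x,y}(\eta_i\ne\omega_i)\le|S|(1-\Gamma(p))/\Gamma(p)$. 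The crucial feature that keeps $|\A|$ out is that the factor $\P(X_S=\sigma)$ is retained in every covariance term, so the final sum over $\sigma$ costs only $\sum_\sigma\P(X_S=\sigma)=1$; with this, the diagonal contributes the $\Gamma(p)$ and the off-diagonal terms the $2|S|(1-\Gamma(p))$ in the numerator. Note also that the relevant quantitative input is this coupling-error sum, not literally $\sum_m\Var_m(p)$, though both are finite under Assumption A.
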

\begin{proof} 
Define the statistic ${f}_S= \|\,\hat r_S^n(\cdot) -\P(X_S\in\cdot)\|_\infty$. Without loss of generality, we assume $\inf S = 1$.
By the Gaussian concentration bound in \cite[Theorem 3.2]{chazottes2023gaussian}, we have
\begin{equation}\label{vendredi}
\P\big(f_S -\E[f_S] > u\big)\leq \exp \big( -2 |S|^{-2}(n-k+2)\, \Gamma (p)^{2}u^2\,\big).
\end{equation}
Therefore, to prove Theorem  \ref{theo:DKW}, it remains to find a good upper bound for $\E[f_S]$.
Here, we follow the argument used in \cite{kontorovich2014uniform}.
By Jensen's inequality, and since $\E[\,\hat r_S^n(\sigma)]=\P(X_S=\sigma)$, we have
\begin{align}
\big(\E[{f}_S]\big)^2
&\leq \E\big[{f}_S^2\big] \notag 
\leq \E\left[\,\sum_{\sigma \in \A^S}\big(\,\hat r_S^n(\sigma) -\P(X_S=\sigma)\big)^2\right] \notag \\
&\leq \mathlarger{\sum}_{\sigma \in \A^S}  \Big(\E\big[\hat r_S^n(\sigma)^2\big] -\P(X_S=\sigma)^2\Big).
\label{eq:DKW1}
\end{align}
For all $\sigma \in \A^S$, we have 
\begin{align*}
&\E\big[\,\hat r_S^n(\sigma)^2\big]\\
&= \frac{1}{(n-k+2)^2}\,\E\!\left[\left(\,\sum_{i=0}^{n-k+1} \1 \{X_{S+i}=\sigma\}\right)^2\,\right]\\
& =\frac{1}{(n-k+2)^2}\,\E\!\left[\,\sum_{i=0}^{n-k+1}  \1 \{X_{S+i}=\sigma\}\right]\\
& \qquad \qquad \qquad \quad \quad \;+ \frac{2}{(n-k+2)^2}\,\E\!\left[\,\sum_{j=1}^{n-k+1}\sum_{i=0}^{j-1}\,  \1 \{X_{S+i}=\sigma\}\, \1 \{X_{S+j}=\sigma\}\right].
\end{align*}
Hence, by stationarity, we get
\begin{align*}
&\E\big[\,\hat r_S^n(\sigma)^2\big]\\
& = \frac{\P(X_S = \sigma)}{n-k+2} + \frac{2}{(n-k+2)^2}\sum_{j=1}^{n-k+1}\sum_{i=0}^{j-1}\,\P\big(X_{S+i} = \sigma, X_{S+j} = \sigma\big)\\
& = \frac{\P(X_S = \sigma)}{n-k+2} +
\frac{2}{(n-k+2)^2}\sum_{j=1}^{n-k+1}\sum_{i=0}^{j-1}\,\P(X_S =\sigma)\,\P\big(X_{S+j} = \sigma \,\big|\, X_{S+i} = \sigma\big)\\
& \leq \frac{\P(X_S = \sigma)}{n-k+2} + \frac{2}{(n-k+2)^2}\sum_{j=1}^{n-k+1}\sum_{i=0}^{j-1}
\Big[\P(X_S = \sigma)\times \\
& \qquad \qquad\qquad \quad\;\; \big(\P(X_S = \sigma) +\;\big|\P\big(X_{S+j} = \sigma \,\big|\, X_{S+i} = \sigma\big) - \P(X_S = \sigma)\,\big|\,\big)\Big].
\end{align*}
Now, for all $\sigma \in \A^S$, we have
\begin{align*}
& \big|\,\P\big(X_{S+j} = \sigma \mid X_{S+i} = \sigma\big) - \P(X_S = \sigma)\big| \\
& \quad \leq \sup_{\tilde{\sigma} \in \A^S}|\,\P(X_{S+j} = \sigma \mid X_{S+i} = \sigma) - \P(X_{S+j} = \sigma \mid X_{S+i} = \tilde{\sigma})|\\
& \quad = \sup_{\tilde{\sigma} \in \A^S}
\mid\P\big(X_{S+j-i} = \sigma \mid X_{S} = \sigma\big) -
\P\big(X_{S+j-i} = \sigma \mid X_{S} = \tilde{\sigma}\big)\mid\\
& \quad \leq \sup_{x,y\, \in \A^{\Z_-}}
\mid\P^{x}\big(X_{S+j-i} = \sigma \big) - \P^{y}\big(X_{S+j-i} = \sigma \big)\big|\\
& \quad \leq \sup_{x,y\, \in \A^{\Z_-}} \sum_{\ell\, \in S+j-i} \P^{x,y}\big(\eta_\ell \neq \omega_\ell\big)=:q_{j-i}(S)\,,
\end{align*}
where $\P^{x,y}$ is the one-step maximal coupling between $\P^x$ and $\P^y$, which are the laws of the process $(X_i)_{i\in\Z}$ conditioned on starting with pasts $x$ and $y$, respectively. 
Coming back to the estimation of $\E\left[\hat r_S^n(\sigma)^2\right]$, we have
\begin{align}
\nonumber
& \E\!\left[\hat r_S^n(\sigma)^2\right]- \frac{\P(X_S = \sigma)}{n-k+2}\\
\nonumber
& \leq \frac{2}{(n-k+2)^2}\sum_{j=1}^{n-k+1}\,\sum_{i=0}^{j-1}\P(X_S = \sigma)\big(\P(X_S = \sigma) +
q_{j-i}(S)\big) \\
\nonumber
& \leq  \P(X_S = \sigma)^2+ \frac{2\,\P(X_S = \sigma)}{(n-k+2)^2}\sum_{j=1}^{n-k+1}\,\sum_{i=0}^{j-1} 
q_{j-i}(S) \\
\nonumber
& \leq  \P(X_S = \sigma)^2+ \frac{2\,\P(X_S = \sigma)}{(n-k+2)^2}\sum_{j=1}^{n-k+1}\,\sum_{i\ge1} 
q_{i}(S) \\
\nonumber
& \leq  \P(X_S = \sigma)^2+ \frac{2\,\P(X_S = \sigma)}{n-k+2}\,\sum_{i\ge1} 
q_{i}(S)\,. 
\nonumber
\end{align}
On the other hand, 
\[
\sum_{i\,\ge1} q_{i}(S)=\sum_{i\,\ge1}\;\sup_{x,y\, \in \A^{\Z_-}} \sum_{\ell\, \in S+i} \P^{x,y}\big(\eta_\ell \neq \omega_\ell\big)\le|S|\sum_{i\,\ge1} \;\sup_{x,y\, \in \A^{\Z_-}}\P^{x,y}\big(\eta_\ell \neq \omega_\ell\big)\,.
\]
So we only need an upper bound for the coupling errors. Proposition 5.2 in  \cite{chazottes2023gaussian} gives us that 
\begin{equation*}
\sum_{i=1}^{\infty}\; \sup_{x,y\, \in\, \A^{\Z_-}}\!\!\P^{x,y}
(\eta_{i} \neq \omega_{i}) \leq \frac{1-\Gamma(p)}{\Gamma(p)}.
\end{equation*}
Hence, we have that
\begin{equation}
\label{coucou}
 \E\!\left[\,\hat r_S^n(\sigma)^2\right] -\P(X_S = \sigma)^2\leq  \frac{2\, |S|(1-\Gamma (p))\,\Gamma (p)^{-1} +1}{n-k+2} \,\P(X_S = \sigma).
\end{equation}
Using \eqref{eq:DKW1} we obtain 
\[
\E\big[\|\,\hat r_S^n -\P\|_\infty\big] \leq \sqrt{\frac{2|S|(1-\Gamma (p))\,\Gamma (p)^{-1} + 1}{n-k+2}}\,,
\]
which is the desired bound.
\end{proof}

\section{One-dimensional Gibbs measures}

The goal of this section is to establish Theorem \ref{prop:gibbsvsg} below, from which Proposition \ref{prop:suf_gibbs} follows as a direct corollary. To this end, we return to the framework introduced in the example of Gibbs measures in Section \ref{sec:examples}. Let \((X_j)_{j \in \mathbb{Z}}\) be a Gibbs measure associated with the potential \(\Phi\), and denote by \(p_\Phi\) the regular version of its left conditional expectations, as defined in \eqref{eq:conditionaldef}. 

Observe that by \eqref{eq:ass_integrability}, the conditional probabilities \eqref{eq:specification} are uniformly bounded away from zero, that is 
\begin{equation}
\label{eq:non_nullness}
h_\Lambda:=\inf_{x,y}\P(X_\Lambda=x_\Lambda|X_{\Lambda^{\!\comp}}=y_{\Lambda^{\!\comp}})>0\,,
\end{equation} 
uniformly in $\Lambda$, and in particular, $h_{\{0\}}>0$.

The next theorem \ref{prop:gibbsvsg} establishes a relationship between the variation of \(p_\Phi\) and \(\Delta(\Phi_\Lambda) := \max \Phi_\Lambda - \min \Phi_\Lambda\) (oscillation of $\Phi_\Lambda$). 

\begin{thm}\label{prop:gibbsvsg}
We have $\Var_0(p_\Phi)\le 1- h_{\{0\}}$ and 
\begin{equation}\label{eq:boundingVar}
\Var_k(p_\Phi)\le \frac{|\A|}{2}\sum_{i\ge k}\sum_{\substack{\min\Lambda= 0\\ \max\Lambda \ge \frac i2}}\Delta(\Phi_{\Lambda}),\, k\geq 1.
\end{equation}
\end{thm}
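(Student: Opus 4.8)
The plan is to establish the two bounds in Theorem \ref{prop:gibbsvsg} by comparing the one-sided conditional probabilities $p_\Phi(a \mid x_{-n}^{-1})$ for pasts that agree on a long suffix. The key idea is that $p_\Phi$ can be recovered from the two-sided specification \eqref{eq:specification} by a limiting procedure: for $\Lambda = \{0\}$, one writes $\P(X_0 = a \mid X_{-\infty}^{-1} = x)$ as a limit, over increasing finite boxes $\Delta_N \supset \{0\}$ with the right half fixed to some reference configuration and letting $N \to \infty$, of the finite-volume conditional probabilities, which are explicitly of the Gibbsian form with normalization. The first bound, $\Var_0(p_\Phi) \le 1 - h_{\{0\}}$, follows directly: since $p_\Phi(a \mid x) \ge h_{\{0\}}$ uniformly in $a$ appearing with positive probability and in $x$ (by passing the bound \eqref{eq:non_nullness} through the limit), the total variation distance between any two conditional laws $p_\Phi(\cdot \mid x)$ and $p_\Phi(\cdot \mid y)$ is at most $1 - h_{\{0\}}$ — this is the standard observation that probability vectors sharing a common lower bound $h$ coordinatewise are within $1-h$ in total variation.

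For the quantitative bound \eqref{eq:boundingVar}, the plan is as follows. Fix two pasts $x, y \in \A^{\Z_-}$ with $x_i = y_i$ for $i \ge -k$. I would express the difference $p_\Phi(a \mid x) - p_\Phi(a \mid y)$ by writing each side via the two-sided DLR equation on a large symmetric box $[-N, N]$, with the configuration outside the box and on the positive coordinates held to a common reference (which is legitimate by the limit characterization of $p_\Phi$ and translation invariance). The two resulting finite-volume Gibbs weights differ only through the energy contributions of those potential terms $\Phi_{\Lambda'}$ whose support straddles the region where $x$ and $y$ differ, i.e. reaches coordinates $\le -k$, while also being "felt" at $0$. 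Bounding $|e^{-\beta H} - e^{-\beta H'}|$ by $\beta$ times the difference in energies, and the latter by a sum of $\Delta(\Phi_{\Lambda'})$ over the relevant $\Lambda'$, then normalizing, gives a bound on $\frac12 \sum_a |p_\Phi(a\mid x) - p_\Phi(a \mid y)|$. The combinatorial bookkeeping — translating "$\Lambda'$ connects a coordinate $\le -k$ to the coordinate $0$" into the condition $\min \Lambda = 0$, $\max\Lambda \ge i$ summed appropriately, with the factor $|\A|/2$ and the split at $i/2$ coming from distributing a long interaction's reach between its left and right extent — is the step I expect to be the main obstacle; getting the index ranges in \eqref{eq:boundingVar} exactly right (rather than merely up to constants) requires care about which translated potential terms contribute and about reindexing after using translation invariance $\Phi_{\theta\Lambda}\circ T = \Phi_\Lambda$.

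I would organize the write-up as: (i) recall the limit characterization of $p_\Phi$ from the finite-volume specifications and note the uniform lower bound \eqref{eq:non_nullness} persists in the limit, yielding $\Var_0(p_\Phi)\le 1-h_{\{0\}}$; (ii) for $k \ge 1$, set up the finite-volume comparison of $p_\Phi(\cdot\mid x)$ and $p_\Phi(\cdot\mid y)$ for pasts agreeing past $-k$; (iii) bound the energy difference by $\sum_{\Lambda'} \Delta(\Phi_{\Lambda'})$ over potential supports that reach both a coordinate $\le -k$ and the origin, using the elementary inequality $|e^{-s}-e^{-t}| \le |s-t|$ for $s,t\ge 0$ after normalization; (iv) perform the change of variables to recast the sum over such $\Lambda'$ in the stated form, absorbing the $|\A|$ from summing over $a$ and the factor $\frac12$ from the total-variation normalization. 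The absolute summability \eqref{eq:ass_integrability} guarantees all sums converge and that the $N\to\infty$ limit is justified by dominated convergence. Finally, Proposition \ref{prop:suf_gibbs} is immediate: condition \eqref{condition-on-Phi} makes the right-hand side of \eqref{eq:boundingVar} summable in $k$, so $\sum_{k\ge1}\Var_k(p_\Phi) < \infty$, and together with $\Var_0(p_\Phi) \le 1 - h_{\{0\}} < 1$ this is exactly Assumption \ref{eq:Gammadef}; the uniform upper bound $\bar p < 1$ of \eqref{eq:exponentialUpper} follows from the two-sided non-nullness \eqref{eq:non_nullness} applied with $\Lambda = \{0\}$ to the complementary symbol.
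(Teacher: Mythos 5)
Your bound $\Var_0(p_\Phi)\le 1-h_{\{0\}}$ is fine in substance (the paper obtains $\inf_{a,x}p_\Phi(a|x)\ge h_{\{0\}}$ by writing $p_\Phi(a|x)$ as the integral of the two-sided conditional probability over the future, i.e.\ by the tower property, which is the clean way to ``pass \eqref{eq:non_nullness} through''). The quantitative bound \eqref{eq:boundingVar}, however, has a genuine gap. You propose to represent $p_\Phi(a\mid x)$ as a limit of finite-volume Gibbs weights with the future coordinates \emph{fixed to a reference configuration}, and then to bound the difference for two pasts agreeing beyond $-k$ by the oscillations $\Delta(\Phi_{\Lambda'})$ of those interaction terms that couple site $0$ to coordinates $\le -k$. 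This only captures half of the dependence. The one-sided conditional probability in \eqref{eq:conditionaldef} is the two-sided conditional probability of \eqref{eq:specification} \emph{averaged over the future with respect to the conditional law of the future given the past}; the far past influences $p_\Phi(a\mid x)$ both (a) directly through energy terms linking $0$ to sites $\le -k$, and (b) indirectly because the law of the future itself changes when the past changes. Your plan accounts only for (a); fixing a reference future and invoking dominated convergence does not recover the true conditional expectation, and the identification of that fixed-future limit with $p_\Phi$ is exactly the delicate two-sided-versus-one-sided (Gibbs measure versus $g$-measure) issue that the paper circumvents by a completely different route: it builds a two-sided potential $\phi$ from $\Phi$ (Coelho--Quas) with $\mathrm{var}_n(\phi)\le\sum_{\min\Lambda=0,\max\Lambda\ge n}\Delta(\Phi_\Lambda)$, converts it to a one-sided potential $\varphi$ with $\mathrm{var}_n(\varphi)\le\mathrm{var}_{n/2}(\phi)$ (Pollicott), and then passes to the normalized $g$-function via Ledrappier/Walters, with $\mathrm{var}_n(\log g)\le\sum_{j\ge n}\mathrm{var}_j(\varphi)$, finally identifying $g(xa)=p_\Phi(a|x)$ a.s.

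This also explains why your bookkeeping cannot reproduce the stated right-hand side. The outer sum $\sum_{i\ge k}$ in \eqref{eq:boundingVar} is the tail sum coming from the Ledrappier step (the cost of normalizing a one-sided potential into a $g$-function), and the threshold $\max\Lambda\ge i/2$ comes from the two-sided-to-one-sided conversion, not from ``distributing a long interaction's reach between its left and right extent''. A direct energy-difference estimate of type (a) would at best yield a single sum over interactions reaching distance $\ge k$, which is a structurally different (and, for the indirect contribution (b), unjustified) bound. To repair your approach you would need a quantitative control of how the conditional law of the future depends on the past — which is essentially equivalent to the estimate you are trying to prove — so the detour through equilibrium states (or some equivalent coupling/transfer-operator argument) is not optional here.
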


Proposition \ref{prop:suf_gibbs} follows directly as a corollary of this result, as \(\Var_k(p_\Phi)\) is a non-increasing sequence. Moreover, for a \((0,1)\)-valued sequence of real numbers \((a_n)_{n \in \mathbb{N}}\), it holds that \(\sum_n a_n < \infty \iff \prod_n (1 - a_n) > 0\). 
Furthermore, defining
\[
n_\star:=\inf\Bigg\{n\ge0:\frac{|\A|}{2}\sum_{i\,\ge\, n}\sum_{\substack{ \min\Lambda= 0\\ \max\Lambda \ge \frac i2}}\Delta(\Phi_\Lambda)<1\Bigg\}\,,
\]
we also see that
\begin{equation}
\Gamma(p_{\Phi})\ge\mathlarger{\prod}_{n\ge n_\star}\Bigg(1-\frac{|\A|}{2}\sum_{i\,\ge\, n}\sum_{\substack{ \min\Lambda= 0\\ \max\Lambda \ge \frac i2}}\Delta(\Phi_\Lambda)\Bigg),
\end{equation}
which, in principle, makes it possible to derive explicit bounds on \(\Gamma(p_{\Phi})\).

Before delving into the proof of this theorem, let us offer a brief prelude.
For a given function $\phi$, defined either on $\A^\Z$ or $
\A^{\Z_-}\times\A$, an \emph{equilibrium state} for $\phi$ is a measure $\mu_\phi$ on $\A^\Z$, shift-invariant (that is $\mu_\phi\circ T^{-1}=\mu_\phi$ where $T$ is the left shift) which maximizes $\int \phi \dd\mu+h_\mu(T)$ where $h_\mu(T)$ denotes the entropy of $\mu$. The function $\phi$ is called two-sided (\emph{resp.} one-sided) potential function if it is defined on $\A^\Z$ (\emph{resp.} on $\A^{\Z_-}\times \A$). 
 
The regularity of a potential is measured by its ``variation'' (modulus of continuity with respect to the usual distance).
For $n\geq 0$, let
\[
\text{var}_n(\phi):=\sup\big\{|\phi(x)-\phi(y)|:x,y\in \A^\Z, x_i=y_i,|i|\le n\big\},
\]
for a two-sided potential $\phi$, and 
\[
\text{var}_n(\varphi):=\sup\big\{|\varphi(xa)-\varphi(ya)|:x,y\in \A^{\Z_-},a\in \A, x_i=y_i,i\ge-n\big\}\,,
\]
for a one-sided potential $\varphi$. 

Theorem \ref{prop:gibbsvsg} establishes a connection between the variation \(\Delta(\Phi)\) of the potential in a Gibbs (two-sided) specification and the variation \(\Var_n(p)\) of the one-sided conditional expectations. In essence, it links two distinct methods of specifying stationary processes: the two-sided conditioning used in Gibbs measures and the one-sided conditioning employed in stochastic processes. The proof strategy leverages equilibrium states from dynamical systems, offering a third perspective on defining stationary processes and serving as a bridge between the two conditioning frameworks.

\begin{proof}[Proof of Theorem \ref{prop:gibbsvsg}] Denote by $\mu$ the law of the process $(X_j)_{j \in \Z}$, that is, the measure which is defined through $\mu(C)=\P(X_{-\infty}^{+\infty}\in C)$ for any measurable set $C$. 
Since $(X_j)_{j \in \Z}$ is stationary, $\mu$ is invariant under the shift ($\mu(C)=\mu(T(C))$ for any measurable set $C$).
We know by assumption that $(X_j)_{j \in \Z}$ (or equivalently $\mu$) satisfies \eqref{eq:specification}.
 It is proven in \cite[Section 3]{coelho_quas_1998} that a two-sided potential $\phi$ can be constructed from $\Phi$, which is such that $\mu$ is an equilibrium state for $\phi$ and moreover 
\begin{equation}\label{eq:step1}
\text{var}_n(\phi)\le \sum_{\substack{\min\Lambda= 0\\ \max\Lambda \ge n}}\Delta(\Phi_\Lambda). 
\end{equation}
We next use \cite[Section 5]{pollicott2000rates} which allows us to construct a one-sided potential $\varphi$ out of the two-sided potential $\phi$, in such a way that $\mu$ is an equilibrium state for $\varphi$ and moreover
\begin{equation}\label{eq:step2}
\text{var}_n(\varphi)\le\text{var}_{\frac{n}{2}}(\phi). 
\end{equation}
We now invoke \cite[Section IV]{ledrappier/1974} (see also \cite[Proof of Theorem 3.3]{walters/1975}), which  constructs a one-sided normalized  function $g$ (normalized means that it satisfies
$\sum_{a\in \A}g\big(x_{-\infty}^{-1}a\big)=1$, for any $x_{-\infty}^{-1}$) such that $\inf g>0$, and $\mu$ is an equilibrium state for $\log g$. Moreover
\begin{equation}\label{eq:step3}
\text{var}_n(\log g)\le \sum_{k\ge n}\text{var}_k(\varphi).
\end{equation}
Now, by the mean value theorem applied to the exponential function, we have for all $n$,
$x,y\in \A^{\Z_-},a\in \A$, such that $x_i=y_i,i\ge-n$, 
\[
|\, g(xa)-g(ya)|=\big|\e^{\log g(xa)}-\e^{\log g(ya)}\big|\leq
|\log g(xa)-\log g(ya)|,
\]
where we used that $0<g<1$. Thus
\begin{equation}\label{eq:step4}
\text{var}_n(g)\le \text{var}_n(\log g),\, n\geq 1\,.
\end{equation}

To conclude, \cite{ledrappier/1974} also tells us that  $\mu$ being an equilibrium state for $\log g$ is equivalent to saying that the stationary process of the corresponding equilibrium state satisfies \eqref{eq:conditionaldef} with $g(xa)$ in place of $p(a|x)$. But by the a.s. uniqueness of the regular version, we conclude that $g(xa)=p_{\Phi}(a|x)$ $\P$-a.s., and thus $\text{var}_n(g)=\text{var}_n(p_\Phi)$. 

Combining \eqref{eq:step1}, \eqref{eq:step2}, \eqref{eq:step3}, and \eqref{eq:step4}, and noting that
\(
\Var_n(p) \leq \frac{|\A|}{2} \text{var}_n(p)
\), we thus proved \eqref{eq:boundingVar}.

It only remains to prove that $\Var_0(p_\Phi)\le 1- h_{\{0\}}$. We claim that $1-\text{Var}_0(p_\Phi)=\inf_{x,y\in \A^{\Z_-}}\sum_{a\in\A} p_\Phi(a|x)\wedge p_\Phi(a|y)\ge \inf_{a\in\A,x\in \A^{\Z_-}}p_\Phi(a|x)\ge h_{\{0\}}$.
Indeed, for any $x\in \A^{\Z_-}$,
\begin{align*}
\inf_{a\in\A,\,x\in \A^{\Z_-}}p_\Phi(a|x)
& =\inf_{a\in\A,\,x\,\in \A^{\Z_-}} \mathlarger{\int}_{\A^\N}\frac{\exp{\left(-H^\Phi_{\{0\}}(xay)\right)}}{Z^\Phi_\Lambda((xay)_{\Z\setminus\{0\}})}\dd\P(y)\\
& \ge \mathlarger{\int}_{\A^\N}\;\inf_{a\in\A,\,x\in \A^{\Z_-}}\frac{\exp{\left(-H^\Phi_{\{0\}}(xay)\right)}}{Z^\Phi_\Lambda((xay)_{\Z\setminus\{0\}})}\dd\P(y)\\
& \ge \mathlarger{\int}_{\A^\N}\;\inf_{xay\,\in \A^{\Z}}\frac{\exp{\left(-H^\Phi_{\{0\}}(xay)\right)}}{Z^\Phi_\Lambda((xay)_{\Z\setminus\{0\}})}\dd\P(y)\\
& \ge \inf_{xay\,\in \A^\Z} \frac{\exp{\left(-H^\Phi_{\{0\}}(xay)\right)}}{Z^\Phi_\Lambda(y)}
=h_{\{0\}}.
\end{align*}
To prove that condition \eqref{eq:exponentialUpper} is satisfied, we have that $\sup_{a\in\A,\,x\in \A^{\Z_-}}p_\Phi(a|x)\leq 1-h_{\{0\}} < 1$. The theorem is proved.
\end{proof}

\section{A lemma}\label{app:KL}

The following lemma is a classical upper bound for the Kullback-Leibler divergence using chi-square distance.
\begin{lem} \label{lemma:KLchi}
Let $\P$ and $\Q$ be two probability measures on a countable alphabet $\A$. 
Then
\begin{equation*}
\sum_{a \in \A} \P(X=a) \log{\frac{\P(X=a)}{\Q(X=a)}} \leq \sum_{a \in \A} \frac{\big(\P(X=a)-Q(X=a)\big)^2}{\Q(X=a)},
\end{equation*}
where we define $\log 0/0 = 0$ for the left hand side and $0/0 = 0$ for the right hand side.
\begin{proof}
Using the concavity of the logarithm, then adding and subtracting $\Q(X=a)$ and expanding the square, and finally using the inequality $\log (1+u)\leq u$ for $u\geq 0$, we get
\begin{align*}
&\sum_{a\, \in\, \A} \P(X=a) \log{\frac{\P(X=a)}{\Q(X=a)}} \leq  \log{\left(\;\sum_{a\, \in\, \A}\frac{\P(X=a)^2}{\Q(X=a)}\right)}\\
& = \log{ \left( 1 + \sum_{a\, \in\, \A}\frac{\big(\P(X=a)-\Q(X=a)\big)^2}{\Q(X=a)}\right)}\\
& \leq \sum_{a\, \in\, \A}\frac{\big(\P(X=a)-\Q(X=a)\big)^2}{\Q(X=a)},
\end{align*}
which proves the lemma.
\end{proof}
\end{lem}

\noindent{\bf Acknowledgements.}
The authors acknowledge FAPESP (Regular Research Grants 2019/23439-4 and scholarship abroad 2024/06341-9) for support. 
D. Y. T. and S. G. gratefully acknowledge \'Ecole Polytechnique for supporting their visits to CPHT, funding a one-month stay in 2022 and another in 2024. D. Y. T. was partially supported by CNPq Grant 421955/2023-6.

\bibliographystyle{plain} 

\bibliography{guessing-biblio}       

\begin{thebibliography}{10}

\bibitem{chazottes2023gaussian}
J.-R. Chazottes, S.~Gallo, and D.~Y. Takahashi.
\newblock Gaussian concentration bounds for stochastic chains of unbounded
  memory.
\newblock {\em The Annals of Applied Probability}, 33(5):3321--3350, 2023.

\bibitem{Chazottes_Ugalde_2011}
J.-R. Chazottes and E.~Ugalde.
\newblock On the preservation of gibbsianness under symbol amalgamation.
\newblock In Brian Marcus, Karl Petersen, and Tsachy~Editors Weissman, editors,
  {\em Entropy of Hidden Markov Processes and Connections to Dynamical Systems:
  Papers from the Banff International Research Station Workshop}, London
  Mathematical Society Lecture Note Series, page 72--97. Cambridge University
  Press, 2011.

\bibitem{coelho_quas_1998}
Z.~Coelho and A.~Quas.
\newblock Criteria for $\bar{d}$-continuity.
\newblock {\em Transactions of the American Mathematical Society},
  350(8):3257--3268, 1998.

\bibitem{comets_fernandez_ferrari_2002}
F.~Comets, R.~Fern{\'a}ndez, and P.~A. Ferrari.
\newblock Processes with long memory: regenerative construction and perfect
  simulation.
\newblock {\em The Annals of Applied Probability}, 12(3):921--943, 2002.

\bibitem{de2016continuity}
W.~De~Carvalho, S.~Gallo, and N.~L. Garcia.
\newblock Continuity properties of a factor of {M}arkov chains.
\newblock {\em Journal of Applied Probability}, 53(1):216--230, 2016.

\bibitem{del2022multiclass}
Pablo Del~Moral, S{\l}awomir Nowaczyk, and Sepideh Pashami.
\newblock Why is multiclass classification hard?
\newblock {\em IEEE Access}, 10:80448--80462, 2022.

\bibitem{dobrushin1956central}
R.~L. Dobrushin.
\newblock Central limit theorem for nonstationary {M}arkov chains. {I}.
\newblock {\em Theory of Probability \& Its Applications}, 1(1):65--80, 1956.

\bibitem{douc2018markov}
R.~Douc, E.~Moulines, P.~Priouret, and Ph. Soulier.
\newblock {\em Markov chains}.
\newblock Springer, 2018.

\bibitem{fernandez/maillard/2005}
R.~Fern{\'a}ndez and G.~Maillard.
\newblock Chains with complete connections: general theory, uniqueness, loss of
  memory and mixing properties.
\newblock {\em Journal of Statistical Physics}, 118(3-4):555--588, 2005.

\bibitem{fokianos1996categorical}
K.~Fokianos.
\newblock {\em Categorical time series: Prediction and control}.
\newblock University of Maryland, College Park, 1996.

\bibitem{fokianos2019categorical}
K.~Fokianos and L.~Truquet.
\newblock On categorical time series models with covariates.
\newblock {\em Stochastic processes and their applications}, 129(9):3446--3462,
  2019.

\bibitem{georgii2011gibbs}
H.-O. Georgii.
\newblock {\em Gibbs measures and phase transitions}, volume~9.
\newblock Walter de Gruyter, 2011.

\bibitem{gruver2024large}
N.~Gruver, M.~Finzi, S.~Qiu, and A.~G. Wilson.
\newblock Large language models are zero-shot time series forecasters.
\newblock {\em Advances in Neural Information Processing Systems}, 36, 2024.

\bibitem{han2023optimal}
Y.~Han, S.~Jana, and Y.~Wu.
\newblock Optimal prediction of {M}arkov chains with and without spectral gap.
\newblock {\em IEEE Transactions on Information Theory}, 69(6):3920--3959,
  2023.

\bibitem{hao2018learning}
Y.~Hao, A.~Orlitsky, and V.~Pichapati.
\newblock On learning {M}arkov chains.
\newblock {\em Advances in Neural Information Processing Systems}, 31, 2018.

\bibitem{kalikow}
S.~Kalikow.
\newblock Random {M}arkov processes and uniform martingales.
\newblock {\em Israel Journal of Mathematics}, 71(1):33--54, 1990.

\bibitem{kedem2005regression}
B.~Kedem and K.~Fokianos.
\newblock {\em Regression models for time series analysis}, volume 488.
\newblock John Wiley \& Sons, 2005.

\bibitem{kontorovich2014uniform}
A.~Kontorovich and R.~Weiss.
\newblock Uniform {C}hernoff and {D}voretzky-{K}iefer-{W}olfowitz-type
  inequalities for {M}arkov chains and related processes.
\newblock {\em Journal of Applied Probability}, 51(4):1100--1113, 2014.

\bibitem{ledrappier/1974}
F.~Ledrappier.
\newblock Principe variationnel et syst\`emes dynamiques symboliques.
\newblock {\em Probability Theory and Related Fields}, 30(3):185--202, 1974.

\bibitem{morvai-weiss/2021}
G.~Morvai and B.~Weiss.
\newblock {On universal algorithms for classifying and predicting stationary
  processes}.
\newblock {\em Probability Surveys}, 18:77 -- 131, 2021.

\bibitem{piraino2020single}
M.~Piraino.
\newblock Single site factors of {G}ibbs measures.
\newblock {\em Nonlinearity}, 33(2):742--761, 2020.

\bibitem{pollicott2000rates}
M.~Pollicott.
\newblock Rates of mixing for potentials of summable variation.
\newblock {\em Transactions of the American Mathematical Society},
  352(2):843--853, 2000.

\bibitem{Redig2010}
F.~Redig and F.~Wang.
\newblock Transformations of one-dimensional gibbs measures with infinite range
  interaction.
\newblock {\em Markov Processes and Related Fields}, 16(1):737--752, 2010.

\bibitem{tsybakov2009nonparametric}
A.B. Tsybakov.
\newblock {\em Introduction to Nonparametric Estimation}.
\newblock Springer Series in Statistics. Springer New York, 2009.

\bibitem{walters/1975}
P.~Walters.
\newblock Ruelle's operator theorem and $g$-measures.
\newblock {\em Transactions of the American Mathematical Society},
  214:375--387, 1975.

\bibitem{wolfer2019minimax}
G.~Wolfer and A.~Kontorovich.
\newblock Minimax learning of ergodic {M}arkov chains.
\newblock In {\em Algorithmic Learning Theory}, pages 904--930. PMLR, 2019.

\bibitem{yu1997cam}
B.~Yu and F.~Assouad.
\newblock Le {C}am.
\newblock {\em Festschrift for Lucien Le Cam: Research Papers in Probability
  and Statistics, Springer-Verlag, New York}, pages 423--435, 1997.

\end{thebibliography}

\end{document}